\documentclass[12pt,a4paper]{article}
\usepackage[utf8]{inputenc}
\usepackage{CJK}

\usepackage{aliascnt}
\usepackage{amsmath,amsthm}
\usepackage{amsfonts}
\usepackage{amssymb}
\usepackage{calligra}
\usepackage{calrsfs}
\usepackage{hyperref}
\usepackage{enumitem}
\usepackage[nameinlink,noabbrev,capitalise]{cleveref}
\usepackage{float}
\usepackage{afterpage}
\usepackage[style=numeric-comp]{biblatex}
\usepackage[left=2cm,right=2cm,top=2cm,bottom=2cm]{geometry}
\usepackage[mathscr]{euscript}
\usepackage{tikz-cd}
\usetikzlibrary{shapes,arrows}

\DeclareMathAlphabet{\mathcalligra}{T1}{calligra}{m}{n}

\theoremstyle{plain} 

\newtheorem{theorem}{Theorem}[section]

\newaliascnt{lemma}{theorem}
\newtheorem{lemma}[lemma]{Lemma}
\aliascntresetthe{lemma}

\newaliascnt{corollary}{theorem}

\aliascntresetthe{corollary}

\newaliascnt{proposition}{theorem}
\newtheorem{proposition}[proposition]{Proposition}
\aliascntresetthe{proposition}

\newaliascnt{remark}{theorem}
\newtheorem{remark}[remark]{Remark}
\aliascntresetthe{remark}

\newaliascnt{example}{theorem}

\aliascntresetthe{example}

\newaliascnt{conjecture}{theorem}
\newtheorem{conjecture}[conjecture]{Conjecture}
\aliascntresetthe{conjecture}

\newaliascnt{problem}{theorem}
\newtheorem{problem}[problem]{Problem}
\aliascntresetthe{problem}

\theoremstyle{definition} 
\newaliascnt{definition}{theorem}
\newtheorem{definition}[definition]{Definition}
\aliascntresetthe{definition}

\crefname{section}{Section}{Sections}
\crefname{definition}{Definition}{Definitions}
\crefname{theorem}{Theorem}{Theorems}
\crefname{lemma}{Lemma}{Lemmas}
\crefname{corollary}{Corollary}{Corollaries}
\crefname{proposition}{Proposition}{Propositions}
\crefname{remark}{Remark}{Remarks}
\crefname{example}{Example}{Examples}
\crefname{conjecture}{Conjecture}{Conjectures}
\crefname{question}{Question}{Questions}
\crefname{formula}{Formula}{Formulas}
\crefname{problem}{Problem}{Problems}

\numberwithin{equation}{section}

\newcommand{\K}{\mathbb{K}}
\newcommand{\A}{\mathcal{A}}

\crefname{enumi}{Case}{Cases}

\title{Generalizing Saito's Criterion for Nonfree Arrangements}

\author{Junyan CHU\thanks{Graduate School of Science, Kyoto University, Japan. \\ 
This research was partially supported by JST Moonshot R\&D Grant Number JPMJMS2021, and KAKENHI, Grant-in-Aid for Scientific Research (B) 25K00921 and (S) 25H00399.\\
 Email:chujy626@gmail.com \qquad
 ORCID: 0009-0006-7686-5766\\
\textbf{Keywords:} Hyperplane arrangements, logarithmic derivations, Saito's criterion, strictly plus-one generated arrangements, projective dimension, maximal minors.\\
\textbf{MSC 2020:} Primary 32S22; Secondary 13N15, 13D02.
}}

\begin{document}

\maketitle

\begin{abstract}
Saito's criterion is a foundational result that algebraically characterizes free hyperplane arrangements via the determinant of a square matrix of logarithmic derivations. It is natural to ask whether this criterion can be generalized to the non-free setting. To address this, we formulate a general problem concerning the maximal minors of a $p \times \ell$ ($p \geq \ell$) derivation matrix and the algebraic relations among their associated coefficients. Focusing on strictly plus-one generated (SPOG) arrangements, we completely solve this minor-based recognition problem under the assumption that $\operatorname{pd} D(\mathcal{A}) \leq 1$. As a direct consequence, we obtain a purely algebraic, necessary and sufficient characterization of SPOG arrangements in dimension three. Ultimately, this framework provides a computable bridge to post-free arrangement theory.
\end{abstract}


\section{Introduction}\label{sec:introduction}

Let $\mathcal A$ be a central hyperplane arrangement in a vector space
$V \cong \mathbb K^\ell$ over a field $\mathbb K$, and write
$S=\mathrm{Sym}(V^\ast)\cong \mathbb K[x_1,\dots,x_\ell]$.
The module of logarithmic derivations along $\mathcal A$
is the graded $S$-module
\[
D(\mathcal A)=\{\theta\in\mathrm{Der}_{\mathbb K}(S)=\bigoplus_{i=1}^\ell S\partial_{x_i}\mid \theta(\alpha_H)\in \alpha_H S
\text{ for all } H\in\mathcal A\},
\]
where $\partial_{x_i}=\frac{\partial}{\partial x_i}$ and $\alpha_H\in V^\ast$ is a defining linear form of $H$.
The arrangement $\mathcal A$ is called \emph{free} if $D(\mathcal A)$ is a free $S$-module. The module $D(\mathcal A)$ is a fundamental object bridging the combinatorics and algebra of arrangements (see, e.g., \cite{O-T}).
Recently, increasing attention has been directed toward \emph{nonfree} arrangements
\cite{spog,Abe2024TransAMS,AbeKawanoue2024,Chu2025CloseToFree}.

The fundamental tool for recognizing free arrangements is the classical Saito criterion (\Cref{saito}).
Beyond this classical result, several generalized Saito criteria exist in the literature, such as in the theory of holonomic divisors \cite{EpureSchulze2019} or projective settings \cite{FaenziJardimValles2024Generalized,FaenziJardimValles2024Avatars}. 
However, these generalizations maintain the goal of testing for freeness.

In this paper, we propose a different direction: generalizing Saito's criterion to test for non-free arrangements with higher homological complexity. A natural question is therefore the following:
\begin{center}
  \emph{Can one generalize Saito's criterion to characterize the generators of $D(\A)$ for non-free $\A$?}  
\end{center}
When $\A$ is nonfree, the module $D(\A)$ requires $p > \ell$ minimal generators. Consequently, the corresponding derivation matrix is of size $p \times \ell$, and the usual Saito determinant no longer exists. Our core idea is to replace this single determinant with the structured collection of maximal minors of the derivation matrix.
\begin{definition}\label{def-minors}
    Let $\theta_1,\dots,\theta_p \in D(\A)$ be $S$-independent, with $p\ge \ell$.
We define the matrix
\[
M[\theta_1,\ldots,\theta_p]
\]
to be the $p\times \ell$ matrix whose $(i,j)$-entry is $\theta_i(x_j)$, where $i=1,\dots,p$ and $j=1,\dots,\ell$.
For a subset
\[
I=\{i_1<\cdots<i_\ell\}\subseteq [p],
\]
let $M_I$ denote the $\ell\times \ell$ submatrix of $M[\theta_1,\ldots,\theta_p]$ consisting of the rows indexed by $I$, and define
\[
\Delta_I:=(-1)^{\sigma(I)}\det(M_I),
\qquad
\sigma(I):=\sum_{k=1}^{\ell}(i_k-k)
=\sum_{i\in I} i-\frac{\ell(\ell+1)}2.
\]
Equivalently, $(-1)^{\sigma(I)}$ is the sign of the shuffle permutation that moves the rows indexed by $I$ to the first $\ell$ positions while preserving their relative order.
\end{definition}
Because each $\theta_i \in D(\A)$, it is a standard fact that every minor $\Delta_I$ is divisible by $Q(\A)$. Hence, we can uniquely write $\Delta_I = g_I Q(\A)$ for some polynomial $g_I \in S$. 
In the free case ($p=\ell$), there is only one coefficient $g_I$, and Saito's criterion simply requires $g_I$ to be a nonzero constant. In our nonfree setting, this naturally evolves into the following fundamental problem.

\begin{problem}\label{question-saito}
Let $M$ be a $p \times \ell$ derivation matrix with maximal minors $\Delta_I = g_I Q(\A)$. Under what algebraic relations among the coefficients $\{g_I \mid I\subseteq [p],\ |I|=\ell\}$ can one conclude that $\theta_1,\dots,\theta_p$ minimally generate $D(\A)$?
\end{problem}

This problem is highly relevant to current research. For example, deciding whether the images of derivations generate the restriction $D(\A^H)$—a central difficulty in studying the Euler restriction map \cite{Abe2024TransAMS,AbeKawanoue2024}—requires an effective, computable test for generators.

To make \Cref{question-saito} tractable, we stratify nonfree arrangements by the projective dimension $\operatorname{pd} D(\A)$. The immediate step beyond freeness—projective dimension one—has recently become a highly active area of study, largely driven by Abe's work on \emph{strictly plus-one generated (SPOG)} arrangements \cite{spog,Abe2024TransAMS}. 
Motivated by this, we provide a complete answer to \Cref{question-saito} for the $\operatorname{pd} D(\A) = 1$ regime, specifically focusing on SPOG arrangements (see \Cref{spogarr}) where $p = \ell+1$.

Given $\ell+1$ homogeneous derivations $\theta_1,\dots,\theta_{\ell+1}\in D(\mathcal A)$, let $M=M[\theta_1,\ldots,\theta_{\ell+1}]$ be the associated $(\ell+1)\times \ell$ matrix. Note that for $I=[\ell+1]\setminus\{i\}$, we have $\Delta_I = (-1)^{\ell+1-i}\det(M_I)$. To simplify the notation, we write $Q=Q(\A),\, M_i:=M_I$ and define 
\[
\Delta_i := (-1)^i \det M_i = g_i Q
\]
for some $g_i \in S$.
Our main theorem is the following.
\begin{theorem}\label{Generalization-Saito-for-spog}
Assume $g_{\ell+1} \in S_1 \setminus \{0\}$, and that $g_1, \ldots, g_\ell \in S_{>0}$ have no non-trivial common divisor modulo $g_{\ell+1}$. 

If $\operatorname{pd} D(\mathcal{A}) \le 1$, then  $\A$ is SPOG. That is, $\theta_1, \dots, \theta_{\ell+1}$ form a minimal generating set for $D(\mathcal{A})$ with the unique relation
\[
    g_1\theta_1 + \cdots + g_{\ell+1}\theta_{\ell+1} = 0.
\]
\end{theorem}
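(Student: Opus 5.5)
The plan is to let $N\subseteq D(\A)$ be the submodule generated by $\theta_1,\dots,\theta_{\ell+1}$, to show that $D(\A)/N$ is supported in codimension at least $3$, and then to use $\operatorname{pd} D(\A)\le 1$ to force $D(\A)/N=0$.

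\emph{Step 1: the relation.} Adjoin a column $(\theta_1(f),\dots,\theta_{\ell+1}(f))^{T}$ to $M$, for an arbitrary $f$. Taking $f=x_j$ gives a square matrix with a repeated column, so its determinant vanishes. Laplace expansion along the new column gives $\sum_i \Delta_i\theta_i(x_j)=0$ for every $j$, hence $\sum_i\Delta_i\theta_i=0$. Dividing by $Q$ yields $\sum_i g_i\theta_i=0$. Since $g_{\ell+1}\neq 0$, the derivations $\theta_1,\dots,\theta_\ell$ are $S$-independent, so $N$ has rank $\ell$.

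\emph{Step 2: the syzygies of $N$.} The kernel $K$ of $S^{\ell+1}\to N$ has rank $1$. Because $N\subseteq \mathrm{Der}_{\K}(S)$ is torsion free, $K$ is a second syzygy, hence reflexive of rank one, hence free; write $K=S\cdot(h_1,\dots,h_{\ell+1})$. The vector $(g_i)_i$ lies in $K$, so $(g_i)_i=c\cdot(h_i)_i$ for some $c\in S$. Now take a prime $\pi$ dividing all $g_i$. If $\pi\neq g_{\ell+1}$ up to units, then $\pi$ would be a nontrivial common divisor of $g_1,\dots,g_\ell$ modulo $g_{\ell+1}$. If $\pi=g_{\ell+1}$, then each $g_i$ vanishes modulo $g_{\ell+1}$, and the assumption again rules this out. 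So $\gcd(g_1,\dots,g_{\ell+1})=1$, $c$ is a unit, and $K=S\cdot(g_i)$. Therefore
\[
0\to S\to S^{\ell+1}\to N\to 0
\]
is exact and $\operatorname{pd}N\le 1$.

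\emph{Step 3: annihilating $D(\A)/N$ (Saito-type argument).} Let $\theta\in D(\A)$ and fix $i$. Apply Cramer's rule to the $\ell$ derivations $\{\theta_k\}_{k\neq i}$ together with $\theta$. Every maximal minor of any derivation matrix in $D(\A)$ is divisible by $Q$, so
\[
\pm g_iQ\,\theta=\sum_{k\neq i}c_kQ\,\theta_k ,
\]
which gives $g_i\theta\in N$ for every $i$. Hence the ideal $J=(g_1,\dots,g_{\ell+1})$ annihilates $D(\A)/N$.

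Next compute the height of $J$. The ring $\bar S=S/(g_{\ell+1})$ is a polynomial ring because $g_{\ell+1}$ is linear. The images $\bar g_1,\dots,\bar g_\ell$ have no common factor in $\bar S$, and they are not all zero by the argument in Step 2. So $(\bar g_1,\dots,\bar g_\ell)$ has height $\ge 2$ in $\bar S$, and therefore $\operatorname{ht}J\ge 3$. It follows that $D(\A)/N$ is either zero or has codimension $\ge 3$.

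\emph{Step 4: the homological conclusion.} The short exact sequence $0\to N\to D(\A)\to D(\A)/N\to 0$ gives
\[
\operatorname{pd}\bigl(D(\A)/N\bigr)\le\max\{\operatorname{pd}D(\A),\ \operatorname{pd}N+1\}\le 2.
\]
On the other hand, a nonzero finitely generated graded module $C$ of codimension $c$ satisfies $\operatorname{Ext}^i(C,S)=0$ for $i<c$ and $\operatorname{Ext}^c(C,S)\neq 0$, so $\operatorname{pd}C\ge c\ge 3$. This contradiction forces $D(\A)=N$.

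Minimality follows from the relation itself. Every $g_i$ has positive degree, and the relation generates all syzygies, so no $\theta_i$ lies in the submodule generated by the others plus $S_{+}N$. Hence $\theta_1,\dots,\theta_{\ell+1}$ minimally generate $D(\A)$ with the unique relation $\sum_i g_i\theta_i=0$, and since $g_{\ell+1}$ is linear, $\A$ is SPOG.

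I expect the delicate points to be twofold. The first is extracting $\gcd(g_i)=1$ and $\operatorname{ht}J\ge 3$ correctly from the hypothesis "no common divisor modulo $g_{\ell+1}$" (Steps 2--3). The second is the codimension-versus-projective-dimension comparison in Step 4. That comparison is exactly where $\operatorname{pd}D(\A)\le1$ enters, and it implicitly needs $\ell\ge 3$; for $\ell\le 2$ every $D(\A)$ is free, and that case needs a separate remark.
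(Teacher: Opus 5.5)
Your proof is correct, and it takes a genuinely different route from the paper's.

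\textbf{The paper's route.} It first shows that $\A$ is not free (\Cref{det-l+1-free-basis} and \Cref{NoCommondivisorCauseNotfree}, via an adjugate basis-exchange argument). It then extends $\theta_1,\dots,\theta_\ell$ to a minimal generating set $G=\{\theta_1,\dots,\theta_\ell,\eta_{\ell+1},\dots,\eta_p\}$. Next it forces $p=\ell+1$, using $\operatorname{pd}D(\A)=1$, the Betti-number degree formula (\Cref{betti-1}, \Cref{degree-relation-Arr}) and $\sum_{i\le\ell}\deg\theta_i=|\A|+1$. Finally it trades $\eta_{\ell+1}$ for $\theta_{\ell+1}$, using the no-common-divisor hypothesis a second time.

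\textbf{Your route.} You work with $N=\sum_i S\theta_i$ throughout. Cramer's rule for \emph{every} $i$ gives $J\,D(\A)\subseteq N$; the paper uses only $i=\ell+1$, in \Cref{relation-get-det}. The hypothesis becomes $\gcd(g_1,\dots,g_{\ell+1})=1$ and $\operatorname{ht}J\ge3$. The inequality $\operatorname{grade}\le\operatorname{pd}$ then kills $D(\A)/N$.

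\textbf{What each buys.} Your argument avoids the non-freeness lemma, the external degree formula, and the bookkeeping that matches minimal relations to extra generators. Minimality and uniqueness of the relation fall directly out of your Step 2. The cost is some standard commutative algebra (heights, grade, the kernel of a rank-$\ell$ $(\ell+1)\times\ell$ matrix) in place of the paper's elementary degree counting.

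Your $\ell\le2$ caveat needs no separate remark. The hypotheses force $\operatorname{ht}J\ge3$, which is impossible for a proper homogeneous ideal of $S$ when $\ell\le2$. So the statement is vacuous there.

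\textbf{Your method proves more.} The hypothesis $\operatorname{pd}D(\A)\le1$ is used only to get $\operatorname{pd}(D(\A)/N)\le2$, and it can be dropped:
\begin{enumerate}[label=(\roman*)]
\item Suppose $D(\A)/N\neq0$ and let $P$ be a minimal prime of its support. Then $P\supseteq J$, so $\operatorname{ht}P\ge3$.
\item By Auslander--Buchsbaum, $\operatorname{depth}N_P\ge\operatorname{ht}P-1\ge2$.
\item Since $D(\A)\subseteq\mathrm{Der}_{\K}(S)$ is torsion free, $\operatorname{depth}D(\A)_P\ge1$.
\item The depth lemma applied to $0\to N_P\to D(\A)_P\to(D(\A)/N)_P\to0$ gives $\operatorname{depth}(D(\A)/N)_P\ge1$. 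This contradicts the fact that $(D(\A)/N)_P$ is nonzero of finite length.
\end{enumerate}
So your argument proves \Cref{Generalization-Saito-for-spog} without the projective-dimension hypothesis, i.e.\ the \emph{if} direction of \Cref{conj:SPOG_characterisation}.
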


For an arbitrary dimension $\ell$, the homological assumption $\operatorname{pd} D(\mathcal{A}) \le 1$ is necessary. However, for dimension $\ell = 3$, it is a standard consequence of the reflexivity of $D(\mathcal{A})$ that $\operatorname{pd} D(\mathcal{A}) \le \ell - 2 = 1$ always holds. Therefore, by applying \Cref{Generalization-Saito-for-spog}, we can completely remove the homological assumption, yielding a purely algebraic, necessary and sufficient characterization for 3-dimensional SPOG arrangements.

\begin{theorem}\label{Generalization-Saito-for-3dim-spog}
Assume $\ell = 3$. Then $\mathcal{A}$ is SPOG, and the set $\{\theta_1, \theta_2, \theta_3, \theta_4\}$ forms a minimal generating set for $D(\mathcal{A})$ satisfying the minimal degree relation 
\[
    g_1\theta_1 + g_2\theta_2 + g_3\theta_3 + g_4\theta_4 = 0,
\]
if and only if $g_4 \in S_1 \setminus \{0\}$, and the coefficients $g_1, g_2, g_3 \in S_{>0}$ have no non-trivial common divisor modulo $g_4$.
\end{theorem}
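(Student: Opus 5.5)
We prove the two implications separately. The sufficiency direction reduces to \Cref{Generalization-Saito-for-spog}, and the necessity direction is a direct computation from a minimal free resolution of $D(\A)$.

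\emph{Sufficiency.} Assume $g_4\in S_1\setminus\{0\}$ and that $g_1,g_2,g_3\in S_{>0}$ have no non-trivial common divisor modulo $g_4$. We only need to check that $\operatorname{pd} D(\A)\le 1$ when $\ell=3$, and then apply \Cref{Generalization-Saito-for-spog}. Since $D(\A)$ is a reflexive graded $S$-module, it is a second syzygy. Concretely, it is the kernel of the map
\[
\mathrm{Der}_{\K}(S)\to \bigoplus_{H\in\A} S/\alpha_H S,\qquad \theta\mapsto(\theta(\alpha_H)),
\]
and it satisfies Serre's condition $(S_2)$, so $\operatorname{depth} D(\A)\ge 2$. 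By Auslander--Buchsbaum,
\[
\operatorname{pd} D(\A)=3-\operatorname{depth} D(\A)\le 1.
\]
\Cref{Generalization-Saito-for-spog} then shows that $\A$ is SPOG, that $\theta_1,\dots,\theta_4$ minimally generate $D(\A)$, and that $\sum g_i\theta_i=0$ is the unique relation. This relation has minimal degree because it generates the syzygy module.

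\emph{Necessity.} Assume $\A$ is SPOG with minimal generators $\theta_1,\dots,\theta_4$ and unique relation $\sum f_i\theta_i=0$, where by definition of SPOG one coefficient, say $f_4$, is a nonzero linear form. The resolution is
\[
0\to S\xrightarrow{(f_i)} S^4\xrightarrow{M} D(\A)\to 0.
\]
We must show two things: that $g_i=c f_i$ for a nonzero constant $c$, and that the coprimality condition on $g_1,g_2,g_3$ holds.

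\emph{Step 1: the minors are proportional to the relation.} Cramer's rule applied to the rank-$3$ matrix $M$ shows that the vector of signed maximal minors $(\Delta_1,\dots,\Delta_4)$ spans the left kernel of $M$ over the fraction field. Because $\sum f_i\theta_i=0$ generates that kernel over $S$, we get $\Delta_i=hf_i$ for some $h\in S$. To identify $h$, we use the Buchsbaum--Eisenbud / Hilbert--Burch structure of a length-one resolution, or equivalently a degree count. Summing degrees gives
\[
\deg\Delta_i=\sum_{j\ne i}\deg\theta_j,
\]
while the SPOG degree formula gives $\sum_j\deg\theta_j=|\A|+\deg f_4$ with $\deg f_4=1$. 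Hence $\deg h=|\A|$. Localizing at each $\alpha_H$ shows $Q\mid h$, so $h=cQ$ and therefore $g_i=cf_i$.

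\emph{Step 2: the coprimality condition.} It remains to show that $f_1,f_2,f_3$ have no common factor modulo $f_4$. This is the main obstacle. Suppose a nonconstant $d$ divides every $\bar f_i$ in $S/(f_4)$. Then the restricted relation on the hyperplane $H_0=\{f_4=0\}$ would factor, which contradicts the minimality of the generators. We will derive the contradiction by restricting the resolution to $H_0$: the quotient $D(\A)/f_4D(\A)$ has the presentation $\bar S^4$ modulo $(\bar f_i)$. Reflexivity, or depth considerations of $D(\A)$ along $f_4$, forces the ideal $(\bar f_1,\bar f_2,\bar f_3)\subset \bar S$ to have height at least $2$, and height $\ge 2$ is exactly the statement that the $\bar f_i$ have no common divisor. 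We expect to need Abe's characterization of SPOG arrangements here, namely that the relation coefficients restricted to $H_0$ form an ideal of height $\ge 2$.
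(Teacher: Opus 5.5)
Your sufficiency direction is the paper's argument: $\operatorname{pd} D(\A)\le 1$ for $\ell=3$, then \Cref{Generalization-Saito-for-spog}. Step 1 of necessity is essentially the first half of \Cref{sufficient-generalization-saito-criterion}, with two small points to fix.
\begin{itemize}
\item The degree identity should read $\sum_{j\ne i}\deg\theta_j=|\A|+\deg f_i$ (\Cref{degree-relation-Arr}), not $\sum_j\deg\theta_j=|\A|+\deg f_4$. As you wrote it, it does not give $\deg h=|\A|$.
\item ``$Q\mid h$'' uses that $(f_1,\dots,f_4)$ is primitive: that is what makes $\alpha_H\mid hf_i$ for all $i$ force $\alpha_H\mid h$. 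You should say so.
\end{itemize}
It is cleaner to argue as the paper does. By \Cref{det-get-relation}, $(g_1,\dots,g_4)$ is itself a syzygy, so $g_i=cf_i$ with $c\in S$, and the degree count gives $c\in\K^*$.

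The genuine gap is Step 2, which is the real content of necessity. You do not prove that $f_1,f_2,f_3$ have no common divisor modulo $f_4$. You only assert that reflexivity or depth ``forces'' height $\ge 2$, and defer to an unspecified characterization of Abe. Depth alone does not suffice, since $\operatorname{depth} D(\A)/f_4D(\A)\ge 1$ only excludes finite-length torsion.

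The paper closes this step with a short direct argument using minimality. Suppose $f_i=hf_i'+f_4k_i$ for $i\le 3$ with $\deg h>0$. Then $\gcd(h,f_4)=1$ by primitivity. Set $\theta=\theta_4+\sum_{i\le 3}k_i\theta_i$; this gives $f_4\theta=-h\sum f_i'\theta_i$, so $\theta=h\theta'$ coefficientwise. Then $\theta'\in D(\A)$, because both $h\theta'$ and $f_4\theta'$ lie in $D(\A)$ and $h,f_4$ are coprime. Finally $\deg\theta'<\deg\theta_4$ forces $\theta_4\in S\theta_1+S\theta_2+S\theta_3$, contradicting minimal generation.

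Your restriction idea can also be completed, but it needs an ingredient you did not state. The module $D(\A)_{\mathfrak p}$ is free for every height-two prime $\mathfrak p\ni f_4$, being reflexive over a two-dimensional regular local ring. Hence $D(\A)/f_4D(\A)$ has no torsion at height-one primes of $\bar S=S/(f_4)$. Together with depth $\ge 1$, this makes $D(\A)/f_4D(\A)$ torsion-free. Up to shifts it equals $\bar S\oplus\operatorname{coker}\bigl(\bar S\xrightarrow{(\bar f_1,\bar f_2,\bar f_3)^{T}}\bar S^{3}\bigr)$, so a common factor of positive degree would produce torsion. The case $\bar f_1=\bar f_2=\bar f_3=0$ is excluded by primitivity.

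Either way, you should also record that $f_1,f_2,f_3\notin\K^*$ by minimality; this is what gives $g_1,g_2,g_3\in S_{>0}$.
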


Conceptually, this minor-based extension of Saito's criterion replaces the single basis determinant of the free case with a structured family of maximal-minor invariants. This provides a genuinely computable, Saito-style recognition principle for modules of projective dimension one, complementing the homological and addition-deletion methods developed in \cite{spog,Abe2024TransAMS} and offering a new toolkit for studying restrictions of both free and non-free arrangements.

\noindent \textbf{Organization.}
The paper is organized as follows. 
\Cref{sec:preliminaries} recalls necessary definitions and preliminary results. 
\Cref{sec:proofs} is devoted to the proofs of our main results. 
Finally, \Cref{sec:conjectures} concludes the paper with a discussion of related conjectures for future research.

\vspace{5mm}
\noindent\emph{Acknowledgements.}
We would like to thank Takuro Abe and Shizuo Kaji for many helpful discussions.
 
 \section{ Preliminaries}\label{sec:preliminaries}

To simplify the discussion, we introduce the following algebraic definitions regarding the relations among derivations.

\begin{definition}
We say that a relation in $D(\A)$
\[
    f_1\theta_1 + \cdots + f_{\ell+1}\theta_{\ell+1} = 0
\]
is of \emph{minimal degree} (or \emph{primitive}) if the polynomials $f_1, \ldots, f_{\ell+1}$ have \textbf{no} non-trivial common divisor.
\end{definition}

\begin{definition}
We say that polynomials $f_1, \ldots, f_p \in S$ have a \emph{non-trivial common divisor $h$ modulo $f$} if there exists $h \in S_{>0}$ such that $f_i \in (h, f)$ for all $i$. 
\end{definition}

In \Cref{def-minors}, we introduced the maximal minors $\Delta_I$ of the $p \times \ell$ derivation matrix $M$. The most fundamental case occurs when $p = \ell$. In this scenario, there is only one maximal minor, which is simply the determinant of the square matrix $M$. The classical Saito's criterion states that freeness is completely characterized by this single determinant being a non-zero scalar multiple of the defining polynomial $Q=Q(\A)$.

\begin{theorem}[Saito's criterion \cite{Saito1980}]\label{saito}
Let $\theta_1, \ldots, \theta_{\ell} \in D(\A)$ be homogeneous derivations. Then the following conditions are equivalent:
\begin{enumerate}[label=(\arabic*)]
  \item $D(\A) = S \theta_1 \oplus \cdots \oplus S \theta_{\ell}$, i.e., $\A$ is free with basis $\{\theta_1, \ldots, \theta_{\ell}\}$.
  \item The derivations $\theta_1, \ldots, \theta_{\ell}$ are $S$-linearly independent, and 
  \[
    \det\big( M[ \theta_1,\ldots,\theta_\ell]\big) = c \, Q(\A)
  \]
  for some nonzero constant $c \in \K^*:=\K\setminus\{0\}$.
  \item The set $\{\theta_1,\ldots,\theta_{\ell}\}$ is $S$-independent and 
  \[
    \sum_{i=1}^{\ell}\deg \theta_i = |\A|.
  \]
\end{enumerate}
In this case, $\A$ is free with exponents $\exp(\A) = (\deg \theta_1, \ldots, \deg \theta_{\ell})$.
\end{theorem}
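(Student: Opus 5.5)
We prove the cycle $(1)\Rightarrow(2)\Rightarrow(1)$ and then $(2)\Leftrightarrow(3)$. All three rest on one preliminary fact: for any $\theta_1,\dots,\theta_\ell\in D(\A)$, the determinant $\det M[\theta_1,\ldots,\theta_\ell]$ is divisible by $Q=Q(\A)$. To see this, fix $H\in\A$ and choose coordinates with $\alpha_H=x_1$. Since $\theta_i(x_1)\in x_1S$ for every $i$, the first column of $M$ is divisible by $x_1$, hence $x_1\mid\det M$. Divisibility of a determinant by a fixed polynomial does not depend on the choice of linear coordinates, because a change of coordinates multiplies the matrix by an invertible scalar matrix. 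The forms $\alpha_H$ are pairwise non-proportional linear forms, hence pairwise coprime primes of $S$, so their product $Q$ divides $\det M$.

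\textbf{$(2)\Rightarrow(1)$, by Cramer's rule.} Let $\theta\in D(\A)$. Regarding derivations as row vectors $(\theta(x_1),\dots,\theta(x_\ell))$, we have the identity
\[
\det M\cdot\theta=\sum_{i=1}^{\ell}\det M^{(i)}\,\theta_i ,
\]
where $M^{(i)}$ is $M$ with row $i$ replaced by the row of $\theta$. Each $M^{(i)}$ is again a derivation matrix of elements of $D(\A)$, so the preliminary fact gives $Q\mid\det M^{(i)}$. Dividing the identity by $\det M=cQ$ with $c\in\K^*$ shows that $\theta\in\sum_i S\theta_i$. Since the $\theta_i$ are $S$-independent by hypothesis, the sum is direct.

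\textbf{$(1)\Rightarrow(2)$: the main step.} Independence is immediate, and $\det M=hQ$ for some $h\in S$ by the preliminary fact. Since $Q\partial_{x_j}\in D(\A)$ for every $j$, we can write $Q\cdot\mathrm{Id}=A\,M$ with $A$ an $\ell\times\ell$ matrix over $S$. Taking determinants gives $Q^\ell=\det A\cdot hQ$, so $h\mid Q^{\ell-1}$. It therefore suffices to show that no $\alpha_H$ divides $h$, i.e.\ that $\alpha_H^2\nmid\det M$.

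For this we localize at the prime $\mathfrak p=(\alpha_H)$. With $\alpha_H=x_1$, every $\alpha_{H'}$ with $H'\neq H$ is a unit in $S_{\mathfrak p}$, so $D(\A)_{\mathfrak p}=D(\{H\})_{\mathfrak p}$. This module is free with basis $x_1\partial_{x_1},\partial_{x_2},\dots,\partial_{x_\ell}$, whose matrix has determinant $x_1$. On the other hand, $\theta_1,\dots,\theta_\ell$ remain a basis after localization. The two bases differ by an invertible matrix over $S_{\mathfrak p}$, so $\det M=u\,x_1$ with $u$ a unit of $S_{\mathfrak p}$. Hence $x_1^2\nmid\det M$. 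Combined with $h\mid Q^{\ell-1}$, this forces $h\in\K$, and $h\neq0$ because the $\theta_i$ are independent. I expect this localization step to be the only delicate point of the proof.

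\textbf{$(2)\Leftrightarrow(3)$ and exponents.} For homogeneous $\theta_i$, every entry of row $i$ of $M$ is homogeneous of degree $\deg\theta_i$ (here $\deg\theta_i$ denotes the polynomial degree of its coefficients). Hence $\det M$ is either zero or homogeneous of degree $\sum_i\deg\theta_i$. Under independence it is nonzero, and it is divisible by $Q$, which has degree $|\A|$. Therefore $\det M=cQ$ with $c\in\K^*$ if and only if $\sum_i\deg\theta_i=|\A|$. The statement about exponents then follows from $(1)$, since the $\theta_i$ form a homogeneous basis of $D(\A)$ and $\exp(\A)$ is by definition the multiset of their degrees.
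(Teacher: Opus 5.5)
Your proof is correct and complete. The paper does not prove this statement; it quotes it from Saito, so there is no argument in the paper to compare against. What you give is the standard proof:
\begin{itemize}
\item the divisibility $Q\mid\det M$;
\item Cramer's rule for $(2)\Rightarrow(1)$;
\item a determinant comparison based on $Q\partial_{x_j}\in D(\A)$ for $(1)\Rightarrow(2)$;
\item a degree count for $(2)\Leftrightarrow(3)$.
\end{itemize}

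The one step you assert without justification is $D(\A)_{\mathfrak p}=D(\{H\})_{\mathfrak p}$, and it holds. The module $D(\A)$ is the kernel of the map $\mathrm{Der}_{\K}(S)\to\bigoplus_{H'}S/\alpha_{H'}S$ given by $\theta\mapsto(\theta(\alpha_{H'})\bmod\alpha_{H'})_{H'}$. Localization is exact, and $(S/\alpha_{H'}S)_{\mathfrak p}=0$ for $H'\neq H$, so only the condition at $H$ survives.

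If you prefer to avoid localization, set $\alpha_H=x_1$. Then the derivations $Q\partial_{x_1},(Q/x_1)\partial_{x_2},\dots,(Q/x_1)\partial_{x_\ell}$ already lie in $D(\A)$. Writing them in the basis $\theta_1,\dots,\theta_\ell$ shows that $\det M=hQ$ divides $Q\,(Q/x_1)^{\ell-1}$, so $h\mid(Q/\alpha_H)^{\ell-1}$ for every $H$. This forces $h\in\K^*$ directly, and the separate bound $h\mid Q^{\ell-1}$ is no longer needed. Both versions prove the same local fact: the localization isolates the single hyperplane $H$, while the global version packages it into explicit elements of $D(\A)$.
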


While free arrangements represent the ideal scenario where $\operatorname{pd} D(\mathcal{A}) = 0$, many natural geometric operations—such as deleting a hyperplane from a free arrangement—disrupt this freeness. To capture the algebraic structure of these ``slightly non-free'' arrangements, Abe \cite{spog} introduced the following class, which forms the most fundamental object in the $\operatorname{pd} D(\mathcal{A}) = 1$ regime.
\begin{definition}[Definition 1.1 in \cite{spog}]\label{spogarr}
An arrangement $\A$ is said to be \emph{strictly plus-one generated (SPOG)} if $D(\A)$ requires exactly $\ell+1$ minimal homogeneous generators $\theta_1,\ldots,\theta_{\ell+1}$, which satisfy a unique generating relation
    \[f_1\theta_1+\cdots+f_{\ell+1}\theta_{\ell+1}=0,\]
where $f_1,\ldots,f_{\ell+1}\in S$ with $f_{\ell+1}\in S_1\setminus\{0\}$.
\end{definition}

\begin{remark}
    Note that under this definition, the projective dimension is $\operatorname{pd} D(\A) = 1$, and $D(\A)$ admits the following minimal free resolution:
    \[
    0 \rightarrow S[-1-\deg \theta_{\ell+1}] \xrightarrow{\begin{pmatrix} f_1 & \cdots & f_{\ell} & f_{\ell+1}  \end{pmatrix}^T} \bigoplus_{k=1}^{\ell+1} S[-\deg \theta_k] \rightarrow D(\A) \rightarrow 0.
    \]
\end{remark}
To relate the degrees of the generators to the cardinality of $\A$, we utilize a well-known result regarding the graded Betti numbers of $D(\A)$. The following theorem is a simplified version of the general formula found in \cite{graded_betti2010}.

\begin{theorem}[Theorem 0.2 in \cite{graded_betti2010}]\label{betti-1}
If the logarithmic derivation module $D(\A)$ has a finite graded free resolution given by:
\begin{align*}
    0 \rightarrow \bigoplus_{i=1}^{r_k} S[-d_i^k] \rightarrow \cdots 
    \rightarrow \bigoplus_{i=1}^{r_1} S[-d_i^1] \rightarrow \bigoplus_{i=1}^{r_0} S[-d_i^0] \rightarrow D(\A) \rightarrow 0,
\end{align*}
then $|\A| = \sum_{j=0}^k (-1)^j \sum_{i=1}^{r_j} d_i^j$.      
\end{theorem}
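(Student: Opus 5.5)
The statement to prove is \Cref{betti-1}: the alternating sum of the shifts in any finite graded free resolution of $D(\A)$ equals $|\A|$. The plan is to express that alternating sum as the $a$-invariant of the Hilbert series of $D(\A)$ (equivalently, its first Chern class). Then we compute the same invariant through the determinant of $D(\A)$, which the divisibility property of the minors pins down as $Q(\A)$.

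\textbf{Step 1 (resolution side).} From the resolution, the Hilbert series is
\[
H_{D(\A)}(t)=\frac{P(t)}{(1-t)^\ell},\qquad P(t)=\sum_{j}(-1)^j\sum_{i}t^{d_i^j}.
\]
Here $P(1)=\operatorname{rank}D(\A)=\ell$ and $-P'(1)=-\sum_j(-1)^j\sum_i d_i^j$. So it suffices to show that $P'(1)$ is an invariant of $D(\A)$ equal to $-|\A|$.

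There are two equivalent ways to phrase this invariant:
\begin{itemize}
\item \emph{Determinant form.} Define the determinant $\det N:=(\wedge^{\operatorname{rank}N}N)^{\ast\ast}$, which for a graded torsion-free module over the UFD $S$ is a rank-one reflexive, hence free, module $S[-c_1(N)]$. The function $c_1$ is additive on short exact sequences, since it can be computed after localizing at height-one primes, where everything becomes free. For a free module $\bigoplus S[-d_i]$ we have $c_1=\sum d_i$. Splitting the resolution into short exact sequences then gives $c_1(D(\A))=\sum_j(-1)^j\sum_i d_i^j$.
\item \emph{Hilbert-series form.} Equivalently, $P'(1)$ is additive and depends only on $D(\A)$, because the Hilbert series does.
\end{itemize}

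\textbf{Step 2 (intrinsic computation).} Consider the inclusion $D(\A)\subseteq \operatorname{Der}_{\K}(S)\cong S^\ell$, which has torsion cokernel. It induces an injection $\wedge^\ell D(\A)\to \wedge^\ell \operatorname{Der}_{\K}(S)\cong S$, whose image is the ideal $J$ generated by all $\det M[\theta_1,\dots,\theta_\ell]$ with $\theta_i\in D(\A)$. Since $\wedge^\ell D(\A)$ modulo torsion is isomorphic to $J$, we get $\det D(\A)\cong J^{\ast\ast}$, the divisorial hull of $J$.

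We know each such determinant is divisible by $Q(\A)$, so $J\subseteq Q(\A)S$. To show the hull is exactly $Q(\A)S$, it suffices to check that $J$ is not contained in $\mathfrak p^{(2)}$ at any height-one prime $\mathfrak p$, and not contained in any other height-one prime.
\begin{itemize}
\item The Euler derivation $\theta_E$ together with $Q(\A)\partial_{x_2},\dots$ gives elements of $J$. Better, for each height-one prime one can find determinants avoiding it.
\item After localizing at a prime $\mathfrak p$ not of the form $(\alpha_H)$, we have $D(\A)_{\mathfrak p}=\operatorname{Der}(S)_{\mathfrak p}$, so $J_{\mathfrak p}=S_{\mathfrak p}$.
\item After localizing at $\mathfrak p=(\alpha_H)$, choose coordinates with $\alpha_H=x_1$. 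Then $D(\A)_{\mathfrak p}$ is free with basis $x_1\partial_{x_1},\partial_{x_2},\dots,\partial_{x_\ell}$, whose determinant is $x_1$. Hence $J_{\mathfrak p}=\alpha_H S_{\mathfrak p}$.
\end{itemize}
Therefore $J^{\ast\ast}=Q(\A)S$. This gives $\det D(\A)\cong S[-|\A|]$, that is, $c_1(D(\A))=|\A|$.

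\textbf{Step 3.} Combining Steps 1 and 2 yields $|\A|=\sum_j(-1)^j\sum_i d_i^j$.

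\textbf{Main obstacle.} The delicate point is the additivity of $c_1$ through the resolution, in particular the graded degree bookkeeping. I would handle it via the Hilbert-series coefficient: for any finitely generated graded module $N$ of rank $r$, the coefficient $-P_N'(1)$ equals $c_1(N)$ plus a contribution from the codimension-one part of torsion. That contribution vanishes here because every term of the resolution and $D(\A)$ itself are torsion-free. Alternatively, localizing at each height-one prime turns the exact resolution into a split exact sequence of free modules, and determinants multiply.
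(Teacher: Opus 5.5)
The paper gives no proof of \Cref{betti-1}. It imports the identity from \cite{graded_betti2010} as a special case of a more general formula there. Your argument is therefore an independent, self-contained route, and it is correct in substance.

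\textbf{Your route.} You identify the alternating sum of shifts with the degree $c_1$ of the rank-one reflexive (hence free) module $\det D(\A)=(\wedge^\ell D(\A))^{\ast\ast}$. You get this by telescoping $c_1$ through the short exact sequences into which the resolution splits. This is legitimate because $D(\A)$ and all syzygy modules are torsion-free. The natural degree-preserving map $\det A\otimes\det C\to\det B$ (well defined modulo torsion) is an isomorphism at every height-one prime. So the comparison factor is a nonzero homogeneous element of $S$ lying in no height-one prime, i.e.\ a constant, and the generators have equal degree.

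You then get $\det D(\A)\cong J^{\ast\ast}=\gcd(J)\,S=Q(\A)S\cong S[-|\A|]$ from two local computations, both correct:
\begin{itemize}
\item $D(\A)_{\mathfrak p}=\operatorname{Der}_{\K}(S)_{\mathfrak p}$ for $\mathfrak p\neq(\alpha_H)$, since $Q(\A)\operatorname{Der}_{\K}(S)\subseteq D(\A)$;
\item $D(\A)_{(\alpha_H)}$ is free on $x_1\partial_{x_1},\partial_{x_2},\dots,\partial_{x_\ell}$ in coordinates with $\alpha_H=x_1$.
\end{itemize}

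\textbf{Comparison.} Your route gives a transparent, characteristic-free explanation of why the number is $|\A|$: the divisor of $\det D(\A)$ is the reduced arrangement divisor. It uses only that $S$ is a UFD. The citation gives the finer information in the source's general formula, which this paper does not need.

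\textbf{Slips to fix.}
\begin{itemize}
\item Since $P'(1)=\sum_j(-1)^j\sum_i d_i^j$, the target is $P'(1)=|\A|$, not $-|\A|$.
\item Likewise, in your last paragraph it is $P_N'(1)$, not $-P_N'(1)$, that equals the determinantal degree of a torsion-free $N$. Codimension-one torsion enters negatively, e.g.\ $P'_{S/(f)}(1)=-\deg f$.
\item The map $\wedge^\ell D(\A)\to\wedge^\ell\operatorname{Der}_{\K}(S)$ need not be injective when $D(\A)$ is not free. Its kernel is the torsion of $\wedge^\ell D(\A)$, which is exactly what your next sentence uses.
\item The Hilbert-series bullet and the remark about $\theta_E$ and $Q(\A)\partial_{x_2},\dots$ are redundant. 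Determinantal additivity alone already gives $c_1(D(\A))=\sum_j(-1)^j\sum_i d_i^j$.
\end{itemize}
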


By applying this homological degree formula to the $\operatorname{pd}=1$ case, we can rigorously formalize the degree constraints of the ``unique relation'' among the minimal generators.

\begin{proposition}\label{degree-relation-Arr}
Suppose that $\A$ is SPOG, and that $\{\theta_1,\ldots,\theta_{\ell+1}\}$ forms a minimal generating set for $D(\A)$ satisfying the unique relation
    \[f_1\theta_1+\cdots+f_{\ell+1}\theta_{\ell+1}=0.\]
Then, for any $i$ such that $f_i \neq 0$, the cardinality of $\A$ satisfies
\[
    |\A| = \sum_{j\neq i} \deg\theta_j - \deg f_i.
\]
\end{proposition}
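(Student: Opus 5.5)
Since $\A$ is SPOG, the minimal free resolution of $D(\A)$ is the one displayed in the Remark following \Cref{spogarr}:
\[
0 \rightarrow S[-d] \rightarrow \bigoplus_{k=1}^{\ell+1} S[-\deg \theta_k] \rightarrow D(\A) \rightarrow 0 .
\]
Here $d$ is the common degree of the relation. The plan is to apply \Cref{betti-1} to this resolution and then express $d$ in terms of $\deg f_i$.

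\textbf{Step 1 (degree of the relation).} The relation $f_1\theta_1+\cdots+f_{\ell+1}\theta_{\ell+1}=0$ is the image of the generator of the rank-one syzygy module. We may take the $\theta_k$ and $f_k$ homogeneous, since the resolution is graded. Then every nonzero term $f_k\theta_k$ has the same polynomial degree $d$, so $d=\deg f_k+\deg\theta_k$ for each $k$ with $f_k\neq 0$. In the Remark's normalization, $f_{\ell+1}\in S_1$ gives $d=1+\deg\theta_{\ell+1}$, which matches the shift $S[-1-\deg\theta_{\ell+1}]$. We only need that for any $i$ with $f_i\ne0$, $d=\deg f_i+\deg\theta_i$.

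\textbf{Step 2 (Betti formula).} \Cref{betti-1}, applied with $k=1$, $r_0=\ell+1$, $d^0_j=\deg\theta_j$, $r_1=1$ and $d^1_1=d$, gives
\[
|\A|=\sum_{j=1}^{\ell+1}\deg\theta_j-d.
\]

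\textbf{Step 3 (substitute).} Fix $i$ with $f_i\neq 0$ and insert $d=\deg\theta_i+\deg f_i$. The $\deg\theta_i$ term cancels, leaving
\[
|\A|=\sum_{j\neq i}\deg\theta_j-\deg f_i .
\]

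The only delicate point is Step 1: we must make sure the relation is homogeneous, so that all nonzero summands share one degree. This holds because the syzygy module of a graded module is graded and is here free of rank one. Its generator is therefore homogeneous, and it is exactly the vector $(f_1,\dots,f_{\ell+1})^T$ appearing in the minimal resolution.
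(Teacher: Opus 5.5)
Your proposal is correct and is essentially the argument the paper intends. The paper writes no separate proof: it only says the proposition follows by applying the Betti-number formula \Cref{betti-1} to the SPOG resolution of the Remark. Your homogeneity argument, which gives $d=\deg f_i+\deg\theta_i$ for every $i$ with $f_i\neq 0$, fills in the one step the paper leaves implicit.
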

\section{Proofs of Main Results}\label{sec:proofs}

Before presenting the main theorem, we first establish several auxiliary results.
\begin{lemma}\label{det-get-relation}
Let $\theta_1, \dots, \theta_{\ell+1}$ and $g_1, \dots, g_{\ell+1}$ be as defined above. Then we have the syzygy:
\[
g_1\theta_1 + \dots + g_{\ell+1}\theta_{\ell+1} = 0.
\]
\end{lemma}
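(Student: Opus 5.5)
The syzygy is the Laplace (cofactor) expansion of a determinant with a repeated column. Since the map $\theta\mapsto(\theta(x_1),\dots,\theta(x_\ell))$ identifies $\mathrm{Der}_{\K}(S)$ with $S^\ell$, it suffices to show
\[
\sum_{i=1}^{\ell+1} g_i\,\theta_i(x_j)=0 \quad\text{for each } j=1,\dots,\ell,
\]
and then divide out $Q$. Multiplying by $Q$ and using $\Delta_i=g_iQ$, the required identity becomes $\sum_i \Delta_i\,\theta_i(x_j)=0$. Because $S$ is a domain and $Q\neq 0$, this implies $\sum_i g_i\theta_i(x_j)=0$.

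Fix $j$ and form the $(\ell+1)\times(\ell+1)$ matrix $N_j=\bigl(M\mid c_j\bigr)$, where $c_j$ is the $j$-th column of $M=M[\theta_1,\dots,\theta_{\ell+1}]$. Then $N_j$ has two equal columns, so $\det N_j=0$. Expand $\det N_j$ along its last column, i.e.\ column $\ell+1$. The entry in row $i$ is $\theta_i(x_j)$, its cofactor sign is $(-1)^{i+\ell+1}$, and deleting row $i$ and the last column leaves exactly $M_i$. Hence
\[
0=\det N_j=\sum_{i=1}^{\ell+1}(-1)^{i+\ell+1}\theta_i(x_j)\det M_i=(-1)^{\ell+1}\sum_{i=1}^{\ell+1}\Delta_i\,\theta_i(x_j),
\]
using the convention $\Delta_i=(-1)^i\det M_i$. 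This gives the required identity for every $j$.

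The only point needing care is the sign bookkeeping: the paper's convention $\Delta_i=(-1)^i\det M_i$ must match the cofactor signs up to the global factor $(-1)^{\ell+1}$, and the computation above confirms that it does. Nothing else is needed. The divisibility $Q\mid\Delta_i$ is assumed in the setup, and the $S$-independence of the $\theta_i$ is not needed, because the identity holds formally for any $(\ell+1)\times\ell$ matrix.
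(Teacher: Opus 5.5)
Your proposal is correct and follows essentially the same route as the paper. Both expand the determinant of $M$ augmented by a duplicated $j$-th column to get $\sum_i \Delta_i\theta_i(x_j)=0$, cancel $Q$ in the domain $S$, and conclude because a derivation is determined by its values on $x_1,\dots,x_\ell$. The only difference is that you invoke $\mathrm{Der}_{\K}(S)\cong S^\ell$ directly where the paper spells out the chain rule, and your sign bookkeeping, giving the global factor $(-1)^{\ell+1}$, checks out.
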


\begin{proof}
Note that $M$ is the $(\ell+1) \times \ell$ matrix over $S$ with entries $M_{ij} = \theta_i(x_j)$. 
Consider the row vector of signed maximal minors $\Delta = (\Delta_1, \dots, \Delta_{\ell+1})$. 
The $j$-th entry of the product $\Delta M$ is given by the sum
\[
\sum_{i=1}^{\ell+1} \Delta_i M_{ij} = \sum_{i=1}^{\ell+1} (-1)^i \det(M_i) \theta_i(x_j).
\]
This expression corresponds to the Laplace expansion of the determinant of an $(\ell+1) \times (\ell+1)$ matrix obtained by duplicating the $j$-th column of $M$. Since any matrix with a repeated column has a determinant of zero, it follows that $\Delta M = 0$.

Recall from our notation that $\Delta_i = g_i Q$ for each $i=1, \dots, \ell+1$. Substituting this into the identity above, we obtain
\[
Q \cdot \sum_{i=1}^{\ell+1} g_i \theta_i(x_j) = 0
\]
for each $j=1, \dots, \ell$. Since $S$ is an integral domain and $Q \neq 0$, we must have $\sum_{i=1}^{\ell+1} g_i \theta_i(x_j) = 0$ for all $j=1, \dots, \ell$.

To show that this holds for any $f \in S$, recall that any derivation $\theta \in \mathrm{Der}_{\mathbb{K}}(S)$ is uniquely determined by its values on the coordinate variables. Specifically, for any $f \in S$, we have $\theta(f) = \sum_{j=1}^{\ell} \frac{\partial f}{\partial x_j} \theta(x_j)$. Thus,
\begin{align*}
\left( \sum_{i=1}^{\ell+1} g_i \theta_i \right)(f) &= \sum_{i=1}^{\ell+1} g_i \theta_i(f) \\
&= \sum_{i=1}^{\ell+1} g_i \left( \sum_{j=1}^{\ell} \frac{\partial f}{\partial x_j} \theta_i(x_j) \right) \\
&= \sum_{j=1}^{\ell} \frac{\partial f}{\partial x_j} \left( \sum_{i=1}^{\ell+1} g_i \theta_i(x_j) \right) = \sum_{j=1}^{\ell} \frac{\partial f}{\partial x_j} \cdot 0 = 0.
\end{align*}
This confirms that $\sum_{i=1}^{\ell+1} g_i \theta_i$ is the zero derivation, completing the proof.
\end{proof}

As a consequence of \Cref{det-get-relation}, we obtain the following property.

\begin{proposition}\label{relation-get-det}
If $g_{\ell+1} \neq 0$, then the following hold:
\begin{enumerate}[label=(\arabic*)]
    \item $g_{\ell+1}\eta \in S\theta_1 + \cdots + S\theta_{\ell}$ for any derivation $\eta \in D(\A)$.
    \item Suppose there is a relation of the form
    \[
    f_1\theta_1 + \cdots + f_{\ell}\theta_{\ell} + g_{\ell+1}\eta = 0
    \]
    for some $f_i \in S$. Then each coefficient $f_i$ is uniquely determined by the relation $\Gamma_i = f_i Q$, where $\Gamma_i$ is the signed maximal minor defined by
    \[
    \Gamma_i := (-1)^i\det M[\theta_1, \dots, \widehat{\theta_i}, \dots, \theta_{\ell}, \eta].
    \]
\end{enumerate}        
\end{proposition}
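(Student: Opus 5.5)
Both parts follow by applying \Cref{det-get-relation} to the family $\theta_1,\dots,\theta_\ell,\eta$ in place of $\theta_1,\dots,\theta_{\ell+1}$, together with the fact that $g_{\ell+1}$ is essentially the Saito determinant of $\theta_1,\dots,\theta_\ell$. Concretely, $\Delta_{\ell+1}=(-1)^{\ell+1}\det M[\theta_1,\dots,\theta_\ell]=g_{\ell+1}Q$. Hence $g_{\ell+1}\neq 0$ means precisely that $\theta_1,\dots,\theta_\ell$ are $S$-independent, and their Saito determinant is $\pm g_{\ell+1}Q$.

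For (1), fix $\eta\in D(\A)$ and form the $(\ell+1)\times\ell$ matrix $M'=M[\theta_1,\dots,\theta_\ell,\eta]$. Its signed maximal minors are $\Gamma_i=(-1)^i\det M[\theta_1,\dots,\widehat{\theta_i},\dots,\theta_\ell,\eta]$ for $i\le\ell$. The last one is $\Gamma_{\ell+1}=(-1)^{\ell+1}\det M[\theta_1,\dots,\theta_\ell]=g_{\ell+1}Q$. Each $\Gamma_i$ is a maximal minor of a matrix of logarithmic derivations, so by the standard divisibility fact recalled after \Cref{def-minors} we can write $\Gamma_i=f_iQ$ with $f_i\in S$. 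The proof of \Cref{det-get-relation} used only $\Delta M=0$ (the repeated-column Laplace expansion) and $Q\neq0$, so it applies verbatim to $M'$ and gives
\[
f_1\theta_1+\cdots+f_\ell\theta_\ell+g_{\ell+1}\eta=0 .
\]
Therefore $g_{\ell+1}\eta=-\sum_{i\le\ell} f_i\theta_i\in S\theta_1+\cdots+S\theta_\ell$. Note that $(f_1,\dots,f_\ell)$ here is one particular solution.

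For (2), take any relation $f_1'\theta_1+\cdots+f_\ell'\theta_\ell+g_{\ell+1}\eta=0$ and subtract the relation produced in (1). This gives $\sum_{i\le\ell}(f_i'-f_i)\theta_i=0$. Since $\theta_1,\dots,\theta_\ell$ are $S$-independent, $f_i'=f_i$ for every $i$, so the coefficients are exactly those with $\Gamma_i=f_iQ$. The independence follows directly, because $\det M[\theta_1,\dots,\theta_\ell]\neq0$ means the coordinate vectors are linearly independent over the fraction field of $S$.

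The only point needing care is the sign bookkeeping: one must check that the sign $(-1)^i$ used in the definition of $\Gamma_i$ matches the convention $\Delta_i=(-1)^i\det M_i$, and that $\Gamma_{\ell+1}$ coincides with $\Delta_{\ell+1}$ rather than its negative. Both are read off directly from the definitions, so there is no genuine obstacle.
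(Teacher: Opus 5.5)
Your proof is correct. It reaches part (1) by a more economical route than the paper, though both rest on the same determinant identity.

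The paper works with the square matrix $M_{\ell+1}$. Cramer's rule (multiplying by the adjugate) gives $g_{\ell+1}Q\eta=\sum_{i\le\ell}h_i\theta_i$ for some $h_i\in S$. A separate computation, using multilinearity of the determinant and a count of $\ell-i$ row swaps, then identifies $h_i=-\Gamma_i$. Only after that does $Q\mid\Gamma_i$ allow division by $Q$.

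You instead observe that part (1) is simply \Cref{det-get-relation} applied to the augmented family $\theta_1,\dots,\theta_\ell,\eta$. Deleting row $i\le\ell$ leaves the rows in the order $\theta_1,\dots,\widehat{\theta_i},\dots,\theta_\ell,\eta$. So the signed minors of $M[\theta_1,\dots,\theta_\ell,\eta]$ are exactly $\Gamma_1,\dots,\Gamma_\ell$ together with $\Delta_{\ell+1}=g_{\ell+1}Q$, and your sign check goes through. The repeated-column identity then produces the relation with coefficients $\Gamma_i/Q$ at once. Existence and the identification of the coefficients come out together, with no further sign chase.

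The two arguments are mathematically the same identity: Cramer's rule is the vanishing of a cofactor expansion with a repeated row or column. They also use the same divisibility fact $Q\mid\Gamma_i$.
\begin{itemize}
\item The paper's version makes explicit the intermediate statement $g_{\ell+1}Q\,\partial_{x_j}\in S\theta_1+\cdots+S\theta_\ell$.
\item Your version makes transparent that the proposition is just the syzygy lemma for $(\theta_1,\dots,\theta_\ell,\eta)$.
\end{itemize}
You were right to invoke the proof rather than the statement of \Cref{det-get-relation}, since $\eta$ need not be homogeneous.

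Part (2) is identical in both: subtract the two relations and use the $S$-independence of $\theta_1,\dots,\theta_\ell$, which follows from $\det M_{\ell+1}\neq 0$.
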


\begin{proof}
\begin{enumerate}[label=(\arabic*)]
    \item We can express the derivations $\theta_1, \dots, \theta_\ell$ in terms of the standard basis $\partial_{x_1}, \dots, \partial_{x_\ell}$ via the matrix equation
    \begin{align*}
        \begin{pmatrix}
        \theta_1 \\
        \vdots \\
        \theta_\ell
        \end{pmatrix}
        =
        M_{\ell+1}
        \begin{pmatrix}
        \partial_{x_1} \\
        \vdots \\
        \partial_{x_\ell}
        \end{pmatrix}.
    \end{align*}
    By multiplying both sides by the adjugate matrix of $M_{\ell+1}$, Cramer's rule implies that $(\det M_{\ell+1}) \partial_{x_j} \in S\theta_1 + \cdots + S\theta_\ell$ for all $j = 1, \dots, \ell$. 
    Recall that $\det M_{\ell+1} = (-1)^{\ell+1}\Delta_{\ell+1} = \pm g_{\ell+1} Q$. It follows that $g_{\ell+1} Q \partial_{x_j} \in S\theta_1 + \cdots + S\theta_{\ell}$.
    Consequently, for any $\eta \in D(\A)$, we can multiply by $g_{\ell+1} Q$ to find coefficients $h_i \in S$ such that
    \begin{equation}\label{eq:eta-relation}
        g_{\ell+1}Q\eta = h_1\theta_1 + \cdots + h_\ell \theta_\ell.
    \end{equation}
    
    We now show that $Q$ divides each $h_i$. Consider the signed minor $\Gamma_i$ defined in part (2). Since $\theta_1, \dots, \theta_\ell, \eta$ are all logarithmic derivations in $D(\A)$, the determinant of their coefficient matrix must be divisible by $Q$; hence $\Gamma_i \in Q S$. 
    Using equation \eqref{eq:eta-relation} and the linearity of the determinant in the last column, we compute:
    \begin{align*}
        g_{\ell+1}Q\Gamma_i &= (-1)^i g_{\ell+1}Q \det M[\theta_1, \dots, \widehat{\theta_i}, \dots, \theta_{\ell}, \eta] \\
        &= (-1)^i \det M[\theta_1, \dots, \widehat{\theta_i}, \dots, \theta_{\ell}, g_{\ell+1}Q\eta] \\
        &= (-1)^i \det M[\theta_1, \dots, \widehat{\theta_i}, \dots, \theta_{\ell}, h_i\theta_i].
    \end{align*}
    Moving the last column to the $i$-th position requires $\ell - i$ column swaps. Thus,
    \begin{align*}
        g_{\ell+1}Q\Gamma_i &= (-1)^i (-1)^{\ell-i} h_i \det M[\theta_1, \dots, \theta_i, \dots, \theta_\ell] \\
        &= (-1)^\ell h_i \det M_{\ell+1} \\
        &= (-1)^\ell h_i \big( (-1)^{\ell+1} g_{\ell+1} Q \big) = -h_i g_{\ell+1} Q.
    \end{align*}
    This implies $-h_i g_{\ell+1} Q \in g_{\ell+1} Q^2 S$. Since $S$ is an integral domain and $g_{\ell+1} Q \neq 0$, we conclude that $h_i \in QS$, meaning $Q \mid h_i$ for all $i = 1, \dots, \ell$. 
    Dividing equation \eqref{eq:eta-relation} by $Q$, we obtain $g_{\ell+1}\eta \in S\theta_1 + \cdots + S\theta_\ell$.

    \item Let $u_i = -h_i / Q \in S$. From the conclusion of part (1), we have a specific relation:
    \[
        u_1\theta_1 + \cdots + u_{\ell}\theta_{\ell} + g_{\ell+1}\eta = 0.
    \]
    Suppose we have another relation $f_1\theta_1 + \cdots + f_{\ell}\theta_{\ell} + g_{\ell+1}\eta = 0$. Subtracting the two yields
    \[
        (f_1 - u_1)\theta_1 + \cdots + (f_{\ell} - u_\ell)\theta_{\ell} = 0.
    \]
    Since $g_{\ell+1} \neq 0$, we have $\det M_{\ell+1} \neq 0$, which means the derivations $\theta_1, \dots, \theta_\ell$ are linearly independent over $S$. Thus, $f_i = u_i$ for all $i = 1, \dots, \ell$. 
    Finally, from our calculation in part (1), we established $g_{\ell+1}Q\Gamma_i = -h_i g_{\ell+1} Q$. Canceling $g_{\ell+1}Q$ gives $\Gamma_i = -h_i$. Therefore, $f_i = u_i = -h_i / Q = \Gamma_i / Q$, which proves $\Gamma_i = f_i Q$.
\end{enumerate}
\end{proof}

\begin{proposition}\label{sufficient-generalization-saito-criterion}
If $\theta_1, \ldots, \theta_{\ell+1}$ form a minimal generating set for $D(\mathcal{A})$ and satisfy a unique relation
\begin{align}\label{equFtheta}
    f_1\theta_1 + \cdots + f_{\ell+1}\theta_{\ell+1} = 0 
\end{align}
of minimal degree, then there exists $c \in \mathbb{K}^*$ such that $\Delta_i = c f_i Q$ for all $i = 1, \ldots, \ell+1$.

Moreover, if $f_{\ell+1} \neq 0$, then $f_1, \ldots, f_\ell$ have no non-trivial common divisor modulo $f_{\ell+1}$.
\end{proposition}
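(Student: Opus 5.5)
We compare the relation from \Cref{det-get-relation} with the given unique relation \eqref{equFtheta}, which generates the syzygy module. Then we extract the "modulo" statement by testing the syzygy against a derivation built from a hypothetical common divisor.

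\textbf{Step 1: $g$ is a multiple of $f$.} By \Cref{det-get-relation}, $\sum g_i\theta_i=0$. Since $\theta_1,\dots,\theta_{\ell+1}$ minimally generate $D(\A)$ with a unique relation, the syzygy module of $(\theta_1,\dots,\theta_{\ell+1})$ is the free module generated by $(f_1,\dots,f_{\ell+1})$. Hence $(g_1,\dots,g_{\ell+1})=h\,(f_1,\dots,f_{\ell+1})$ for some $h\in S$, and so $\Delta_i=hf_iQ$.

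\textbf{Step 2: $h$ is a nonzero constant.} The derivations span a rank-$\ell$ module, so some maximal minor is nonzero; hence $h\neq0$. Compare degrees, using the index $i$ with $f_i\neq 0$. We have $\deg\Delta_i=\sum_{j\ne i}\deg\theta_j$, while \Cref{degree-relation-Arr} gives $|\A|=\sum_{j\ne i}\deg\theta_j-\deg f_i$. Therefore $\deg(hf_iQ)=\deg h+\deg f_i+|\A|=\sum_{j\neq i}\deg\theta_j+\deg h$, which forces $\deg h=0$. So $h=c\in\K^*$.

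\textbf{Step 3: no common divisor modulo $f_{\ell+1}$.} Suppose instead that $f_1,\dots,f_\ell\in(k,f_{\ell+1})$ with $k\in S_{>0}$. Write $f_i=ka_i+f_{\ell+1}b_i$ for $i\le \ell$, taking all pieces homogeneous. Substituting into \eqref{equFtheta} gives
\[
k\sum_{i\le\ell}a_i\theta_i+f_{\ell+1}\Big(\theta_{\ell+1}+\sum_{i\le\ell}b_i\theta_i\Big)=0.
\]
Set $\eta:=\theta_{\ell+1}+\sum_{i\le\ell} b_i\theta_i\in D(\A)$. This $\eta$ can replace $\theta_{\ell+1}$ in a minimal generating set, and with respect to $\theta_1,\dots,\theta_\ell,\eta$ the unique (primitive) relation becomes $(ka_1,\dots,ka_\ell,f_{\ell+1})$.

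Next apply Steps 1--2 to this new generating set. Its minors $\Gamma_i$ equal $c'ka_iQ$ for $i\le\ell$, and the last minor equals $c'f_{\ell+1}Q$. Since the relation is primitive, $\gcd(ka_1,\dots,ka_\ell,f_{\ell+1})=1$, so $k\nmid f_{\ell+1}$. The contradiction must come from a degree count rather than divisibility alone. In the SPOG setting, $\deg f_{\ell+1}$ is small: in the intended application $f_{\ell+1}\in S_1$. Then $(k,f_{\ell+1})$ with $k\notin (f_{\ell+1})$ is a height-two ideal, and $f_{\ell+1}\eta$ lies in the submodule $kD'$ where $D'=S\theta_1+\dots+S\theta_\ell$. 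Because $D'$ is free (the $\theta_i$, $i\le\ell$, are independent), this lets us divide: $f_{\ell+1}$ and $k$ are coprime, so $k$ divides the coefficients of $f_{\ell+1}\eta$ in $D'$. I would then show that $\eta':=\tfrac{1}{k}\bigl(f_{\ell+1}\eta\bigr)$-type manipulations produce a generator of degree less than $\deg\eta$ expressing $\eta$ modulo $D'$, contradicting minimality. Equivalently, $\sum a_i\theta_i=-(f_{\ell+1}/k)\eta$ forces $k\mid f_{\ell+1}$ via independence once $\eta\notin D'$.

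\textbf{Main obstacle.} Step 3 is the hard part. Making this divisibility argument rigorous requires using $\eta\notin D'$, which holds by minimality, together with the freeness of $D'$ to deduce that $k$ divides $f_{\ell+1}$. That would contradict primitivity when $k$ is a nonunit.
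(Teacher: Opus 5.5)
Your Steps 1 and 2 are correct and follow the paper; your rank argument for $h\neq 0$ is in fact a more explicit justification than the paper gives. The gap is in Step 3, the ``moreover'' part, which you do not actually complete.

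First, you import the hypothesis $f_{\ell+1}\in S_1$, which the statement does not give. It is also not needed: primitivity alone gives $\gcd(k,f_{\ell+1})=1$, because a common factor of $k$ and $f_{\ell+1}$ would divide $f_{\ell+1}$ and every $f_i=ka_i+f_{\ell+1}b_i$.

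Second, and more seriously, the inference you aim for is not valid. You want to show that $\sum a_i\theta_i=-(f_{\ell+1}/k)\eta$, together with $\eta\notin D'$ and the independence of $\theta_1,\dots,\theta_\ell$, forces $k\mid f_{\ell+1}$. Nothing in that identity constrains $f_{\ell+1}$. Your observation that $k$ divides the $D'$-coordinates of $f_{\ell+1}\eta$ is a tautology, since those coordinates are $-ka_i$. It says nothing about $\eta$ itself, which has no coordinates in $D'$.

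The missing idea is to divide inside the ambient free module $\mathrm{Der}_{\K}(S)=\bigoplus_j S\partial_{x_j}$, where $\eta$ does have coordinates.
\begin{enumerate}[label=(\roman*)]
\item There $f_{\ell+1}\eta=-k\sum_{i\le\ell}a_i\theta_i$. Since $\gcd(k,f_{\ell+1})=1$, working coordinatewise in the UFD $S$ gives $\eta=k\eta'$ for some $\eta'\in\mathrm{Der}_{\K}(S)$.
\item Next, $\eta'\in D(\A)$. For each $H$, $\alpha_H$ divides $k\,\eta'(\alpha_H)$, because $k\eta'=\eta\in D(\A)$. It also divides $f_{\ell+1}\,\eta'(\alpha_H)$, because $f_{\ell+1}\eta'=-\sum a_i\theta_i\in D(\A)$. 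Since $\alpha_H$ cannot divide both $k$ and $f_{\ell+1}$, it divides $\eta'(\alpha_H)$.
\item Finally, $\deg\eta'=\deg\theta_{\ell+1}-\deg k<\deg\theta_{\ell+1}$. Writing $\eta'$ homogeneously in the generators therefore forces its $\theta_{\ell+1}$-coefficient to vanish, so $\eta'\in D'$.
\item Hence $\theta_{\ell+1}=k\eta'-\sum b_i\theta_i\in D'$, contradicting minimality of the generating set.
\end{enumerate}
This is the paper's argument. The contradiction comes from minimality, via a lower-degree logarithmic derivation, not from any divisibility $k\mid f_{\ell+1}$.
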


\begin{proof}
By \Cref{det-get-relation}, we have the syzygy
\[
g_1\theta_1 + \cdots + g_{\ell+1}\theta_{\ell+1} = 0,
\]
where $\Delta_i = g_i Q$. Since the relation \eqref{equFtheta} is unique and of minimal degree, any other syzygy must be a multiple of it. Thus, there exists some $c \in S$ such that $g_i = c f_i$ for all $i$. 
Noting that $\Delta_i$ is the determinant of the matrix obtained by omitting the derivation $\theta_i$, we have $\deg(\Delta_i) = \sum_{j \neq i} \deg(\theta_j)$. Since $\Delta_i = g_i Q$ and $\deg(Q) = |\mathcal{A}|$, it follows that
\[
\deg(g_i) = \sum_{j \neq i} \deg(\theta_j) - |\mathcal{A}|.
\]
On the other hand, by \Cref{degree-relation-Arr}, the minimal degree relation satisfies $\deg(f_i) = \sum_{j \neq i} \deg(\theta_j) - |\mathcal{A}|$. 
Therefore, $\deg(g_i) = \deg(f_i)$, which implies $\deg(c) = 0$. Since not all $g_i$ are zero, we conclude that $c$ is a non-zero constant, i.e., $c \in \mathbb{K}^*$. This proves the first assertion.

For the second assertion, we proceed by contradiction. Suppose $f_1, \ldots, f_\ell$ share a non-trivial common divisor $h \in S_{>0}$ modulo $f_{\ell+1}$. Then we can write
\[
f_i = h f_i' + f_{\ell+1} k_i
\]
for some $f_i', k_i \in S$ and for all $i = 1, \dots, \ell$. 
Since the relation \eqref{equFtheta} is of minimal degree, the coefficients $f_1, \dots, f_{\ell+1}$ cannot share a common divisor across all terms, implying that $\gcd(h, f_{\ell+1}) = 1$.

Define a new derivation 
\[
\theta = \theta_{\ell+1} + \sum_{i=1}^{\ell} k_i \theta_i \in D(\mathcal{A}).
\]
Substituting $f_i$ into \Cref{equFtheta}, we obtain
\begin{align}\label{eq:ftheta0}
    h \sum_{i=1}^\ell f_i' \theta_i + f_{\ell+1} \theta = 0,
\end{align}
which implies $f_{\ell+1} \theta = -h \sum_{i=1}^\ell f_i' \theta_i$. 

Since $S$ is a unique factorization domain and $\gcd(h, f_{\ell+1}) = 1$, it follows that $h$ must divide the derivation $\theta$. Let $\theta = h \theta'$ for some derivation $\theta'$. 
Consequently, $h \theta' = \theta \in D(\mathcal{A})$ and $f_{\ell+1} \theta' = -\sum_{i=1}^\ell f_i' \theta_i \in D(\mathcal{A})$. 
Applying these to the defining polynomial $Q$, we have $h \theta'(Q) \in QS$ and $f_{\ell+1} \theta'(Q) \in QS$. Because $h$ and $f_{\ell+1}$ are coprime, it follows that $\theta'(Q) \in QS$, which guarantees $\theta' \in D(\mathcal{A})$.

Finally, substituting $\theta = h \theta'$ back into our definition of $\theta$ yields
\[
\theta_{\ell+1} = h \theta' - \sum_{i=1}^{\ell} k_i \theta_i.
\]
Since $h \in S_{>0}$, we have $\deg(\theta') < \deg(h \theta') = \deg(\theta) = \deg(\theta_{\ell+1})$. 
This shows that $\theta_{\ell+1}$ can be generated by elements of strictly lower degree alongside the other generators $\theta_1, \dots, \theta_\ell$. This contradicts the assumption that $\theta_1, \dots, \theta_{\ell+1}$ forms a minimal generating set for $D(\mathcal{A})$. 
\end{proof}

The following lemma only involves \(\theta_1, \dots, \theta_{\ell}\), but we retain the same notation for consistency.

\begin{lemma}\label{det-l+1-free-basis}
   Assume \(g_{\ell+1} \in S_1 \setminus \{0\}\).
   If $\A$ is free, there exists \(k\in \{1,\ldots,\ell\}\) and \(\eta\in D(\A)\) such that \(g_{\ell+1}\eta\in \K^*\theta_k+\sum_{j\neq k, j\leq \ell}S\theta_j\) and the set
   \[\theta_1,\ldots,\theta_{k-1},\eta, \theta_{k+1},\ldots,\theta_{\ell}\]
   form a basis of \(D(\A)\).
\end{lemma}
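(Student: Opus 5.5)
Throughout, write $\alpha:=g_{\ell+1}\in S_1\setminus\{0\}$. The plan is to compare $\theta_1,\ldots,\theta_\ell$ with a homogeneous basis of the free module $D(\A)$, and to take $\eta$ to be one of the basis elements.

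\textbf{Setup.} Since $\A$ is free, fix a homogeneous basis $\varphi_1,\ldots,\varphi_\ell$ of $D(\A)$ with $\deg\varphi_j=d_j$. By \Cref{saito}, $\det M[\varphi_1,\ldots,\varphi_\ell]=c_0Q$ for some $c_0\in\K^*$. Write $e_i=\deg\theta_i$ and
\[
\theta_i=\sum_{j=1}^{\ell} a_{ij}\varphi_j ,
\]
where each $a_{ij}$ is homogeneous of degree $e_i-d_j$ (and $a_{ij}=0$ if this is negative). Then $M_{\ell+1}=A\,M[\varphi_1,\ldots,\varphi_\ell]$ with $A=(a_{ij})$. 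Since $\det M_{\ell+1}=\pm\alpha Q$, we get $\det A=c\,\alpha$ for some $c\in\K^*$. Let $C_{kj}$ denote the $(k,j)$-cofactor of $A$, and set $U:=\operatorname{adj}(A)$, so that $UA=AU=c\alpha I$.

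\textbf{Reduction to a cofactor.} From $U\theta=c\alpha\varphi$ we read off
\[
c\alpha\varphi_j=\sum_{i=1}^{\ell} C_{ij}\theta_i .
\]
Hence if some cofactor $C_{kj}$ is a nonzero constant, put $\eta:=\varphi_j$. Then
\[
\alpha\eta\in\K^*\theta_k+\sum_{i\neq k,\,i\le\ell}S\theta_i .
\]
Moreover, the coefficient matrix of $\theta_1,\ldots,\theta_{k-1},\varphi_j,\theta_{k+1},\ldots,\theta_\ell$ with respect to the basis $\varphi$ is $A$ with row $k$ replaced by the $j$-th unit vector. Its determinant is $C_{kj}\in\K^*$. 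So this family is again a basis of $D(\A)$; equivalently, its Saito determinant is a nonzero constant multiple of $Q$, and \Cref{saito} applies. The whole lemma therefore reduces to showing that \emph{some cofactor $C_{kj}$ is a nonzero constant}.

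\textbf{Main step: a degree count.} This is the crux, and I would prove it as follows.
\begin{itemize}
\item The entry $U_{jk}=C_{kj}$ is either zero or homogeneous of degree
\[
\deg\det A-\deg a_{kj}=1-(e_k-d_j).
\]
In particular, each nonzero entry of $U$ has a degree determined by its position.
\item Since $\det U=(\det A)^{\ell-1}=c^{\ell-1}\alpha^{\ell-1}\neq 0$, some permutation term $\prod_j U_{j\sigma(j)}$ in the expansion of $\det U$ is nonzero.
\item That term is homogeneous of degree
\[
\sum_j\bigl(1-e_{\sigma(j)}+d_j\bigr)=\ell-\Bigl(\sum_i e_i-\sum_j d_j\Bigr)=\ell-1,
\]
using $\sum e_i-\sum d_j=\deg\det A=1$.
\item If every nonzero entry of $U$ had degree at least $1$, this nonzero term would have degree at least $\ell$. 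That contradicts the previous bullet.
\end{itemize}
Hence some factor $U_{j\sigma(j)}=C_{\sigma(j)j}$ of this nonzero term is nonzero of degree $0$, i.e.\ a nonzero constant. Taking $k=\sigma(j)$ and $\eta=\varphi_j$ completes the argument.

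The point to watch is the degenerate case $\ell=1$, where the adjugate is the $1\times1$ matrix $(1)$ and the cofactor is trivially constant. There the same choice $\eta=\varphi_1$ works directly.
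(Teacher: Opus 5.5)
Your proof is correct, and its skeleton is the paper's. You write $\theta_1,\dots,\theta_\ell$ in a basis of $D(\A)$, observe that the transition determinant is a nonzero linear form, use the adjugate to express $g_{\ell+1}\varphi_j$ through the $\theta_i$, and swap $\varphi_j$ in for $\theta_k$ once the cofactor $C_{kj}$ is a unit. Your row-replacement determinant is the same computation as the paper's $\det U_{\ell,\ell}=\operatorname{adj}(U)_{\ell,\ell}$, just without the preliminary reordering to a diagonal position.

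Where you genuinely differ is the key step, the existence of a unit cofactor. The paper passes from a Leibniz term of $\det U$ with one linear and $\ell-1$ constant factors to a unit entry of $\operatorname{adj}(U)$, via the complementary $(\ell-1)\times(\ell-1)$ minor. But that minor is only known to be homogeneous of degree $0$, so its terms can cancel. For example, take $U$ with rows $(x,0,1)$, $(y,1,1)$, $(z,1,1)$. Then $\det U=y-z$ and $-x\cdot 1\cdot 1$ is a nonzero Leibniz term, yet the minor complementary to $x$ is $\det\begin{pmatrix}1&1\\1&1\end{pmatrix}=0$; a different cofactor is the unit there.

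Your argument avoids this. You pick a nonzero term of $\det\operatorname{adj}(A)=(\det A)^{\ell-1}$, whose $\ell$ factors have position-determined degrees summing to $\ell-1$, so some factor is a unit. This proves the step rigorously and uses nothing beyond the homogeneity of the $\theta_i$, which the paper also relies on.

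One cosmetic point: give the degree of $C_{kj}$ as $\sum_{i\ne k}e_i-\sum_{m\ne j}d_m=1-(e_k-d_j)$ rather than $\deg\det A-\deg a_{kj}$, since $a_{kj}$ may be zero.
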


\begin{proof}
Since $\mathcal{A}$ is free, we can choose a basis $\{\eta_1, \ldots, \eta_\ell\}$ for $D(\mathcal{A})$. Let $M(\eta)=M[\eta_1, \ldots, \eta_\ell]$ be the $\ell \times \ell$ coefficient matrix of these derivations. By scaling one of the basis elements by a non-zero constant if necessary, we may assume $\det M(\eta) = Q$.

Let $U \in S^{\ell \times \ell}$ be the transition matrix such that
\[
    (\theta_1, \dots, \theta_\ell) = (\eta_1, \dots, \eta_\ell) U.
\]
Taking the determinant of the corresponding coefficient matrices, we obtain 
$$\det M_{\ell+1} = \det M(\eta) \det(U).$$
Recall from our notation that $\det M_{\ell+1} = (-1)^{\ell+1} g_{\ell+1} Q$. Substituting $\det M(\eta) = Q$, we get:
\[
    (-1)^{\ell+1} g_{\ell+1} = \det U.
\]

The determinant of $U$ is given by the sum of products of its entries. Since $\det(U) = (-1)^{\ell+1} g_{\ell+1} \in S_1 \setminus \{0\}$ has degree 1, and the entries of $U$ have non-negative degrees, at least one term in the Leibniz expansion of the determinant must consist of exactly one entry of degree 1 and $\ell-1$ entries of degree 0. The product of these $\ell-1$ constants corresponds to a term in the expansion of some $(\ell-1) \times (\ell-1)$ minor of $U$. Consequently, the classical adjugate matrix $\operatorname{adj}(U)$ must contain at least one entry in $\mathbb{K}^*$.

By appropriately reordering the indices of the sets $\{\theta_j\}$ and $\{\eta_j\}$, we may assume without loss of generality that the diagonal entry $\operatorname{adj}(U)_{\ell,\ell} \in \mathbb{K}^*$. We claim that setting $\eta := \eta_\ell$ and $k := \ell$ satisfies the lemma.

First, multiplying the transition equation by $\operatorname{adj}(U)$ from the right yields:
\[
    (\theta_1, \dots, \theta_\ell) \operatorname{adj}(U) = (\eta_1, \dots, \eta_\ell) \det(U) = (-1)^{\ell+1} g_{\ell+1} (\eta_1, \dots, \eta_\ell).
\]
Comparing the $\ell$-th components on both sides, we obtain:
\[
    (-1)^{\ell+1} g_{\ell+1} \eta_\ell = \sum_{j=1}^{\ell} \operatorname{adj}(U)_{j,\ell} \theta_j.
\]
Since the coefficient of $\theta_\ell$ is $\operatorname{adj}(U)_{\ell,\ell} \in \mathbb{K}^*$, it immediately follows that $g_{\ell+1} \eta_\ell \in \mathbb{K}^* \theta_\ell + \sum_{j < \ell} S \theta_j$.

Finally, let $U_{\ell,\ell}$ denote the submatrix obtained by removing the $\ell$-th row and $\ell$-th column of $U$. Modulo $\eta_\ell$, the transition equation becomes:
\[
    (\theta_1, \dots, \theta_{\ell-1}) \equiv (\eta_1, \dots, \eta_{\ell-1}) U_{\ell,\ell} \pmod{\eta_\ell}.
\]
Since $\det(U_{\ell,\ell}) = \operatorname{adj}(U)_{\ell,\ell} \in \mathbb{K}^*$, the matrix $U_{\ell,\ell}$ is invertible over $S$. Therefore, $\{\theta_1, \dots, \theta_{\ell-1}\}$ freely generates the quotient module $D(\mathcal{A}) / S\eta_\ell$, which implies that $\{\theta_1, \dots, \theta_{\ell-1}, \eta_\ell\}$ forms a basis for $D(\mathcal{A})$.
\end{proof}

If some $g_i \in \mathbb{K}^*$, then $\mathcal{A}$ is free by \Cref{saito}. We now consider the case where the coefficients are polynomials of positive degree.

\begin{lemma} \label{NoCommondivisorCauseNotfree}
Assume $g_{\ell+1} \in S_1 \setminus \{0\}$, and that $g_1, \ldots, g_\ell \in S_{>0}$ have no non-trivial common divisor modulo $g_{\ell+1}$. Then $\mathcal{A}$ is not free.
\end{lemma}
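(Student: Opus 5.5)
We argue by contradiction: assume $\mathcal{A}$ is free. Since $g_{\ell+1}\in S_1\setminus\{0\}$, \Cref{det-l+1-free-basis} applies. After reindexing we may take $k=\ell$. We then get $\eta\in D(\mathcal{A})$ such that $\theta_1,\dots,\theta_{\ell-1},\eta$ is a basis of $D(\mathcal{A})$ and
\[
g_{\ell+1}\eta = c\,\theta_\ell + \sum_{j<\ell} a_j\theta_j, \qquad c\in\mathbb{K}^*,\ a_j\in S.
\]
Everything is then expressed in this basis. Write $\theta_{\ell+1} = \sum_{j<\ell} b_j\theta_j + b\,\eta$ with $b_j,b\in S$. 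The key observation is that $\theta_\ell$ and $\theta_{\ell+1}$ both live in a free module whose last basis vector $\eta$ is hit by $\theta_\ell$ only through the unit $c/g_{\ell+1}$.

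Next we compute the minors $\Delta_i$ in terms of this basis. Let $N$ be the $(\ell+1)\times \ell$ matrix expressing $\theta_1,\dots,\theta_{\ell+1}$ in the basis $\theta_1,\dots,\theta_{\ell-1},\eta$. By Saito's criterion (\Cref{saito}), the basis matrix has determinant $c'Q$ with $c'\in\mathbb{K}^*$. Hence $g_i$ equals, up to a common nonzero constant, the signed maximal minor of $N$ with row $i$ deleted. Over $S$ the rows of $N$ are as follows:
\begin{itemize}
\item for $i<\ell$, row $i$ is $e_i$;
\item row $\ell$ is $(a_1,\dots,a_{\ell-1},\,g_{\ell+1})/c$, after solving $\theta_\ell = c^{-1}(g_{\ell+1}\eta - \sum a_j\theta_j)$;
\item row $\ell+1$ is $(b_1,\dots,b_{\ell-1},b)$.
\end{itemize}
The minors are now explicit:
\begin{itemize}
\item deleting row $\ell+1$ gives $\pm g_{\ell+1}/c$, consistent with the hypothesis;
\item deleting row $\ell$ gives $\pm b$;
\item deleting row $i<\ell$ gives, up to sign and the unit $c$, the $2\times 2$ determinant $a_i b - g_{\ell+1} b_i$.
\end{itemize}
So, up to units, $g_\ell \sim b$ and $g_i \sim a_i b - g_{\ell+1}b_i \equiv a_i b \pmod{g_{\ell+1}}$ for $i<\ell$.

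Hence every $g_1,\dots,g_\ell$ lies in the ideal $(b, g_{\ell+1})$. By hypothesis $g_\ell\in S_{>0}$, and $g_\ell$ is a unit multiple of $b$, so $b$ has positive degree. Thus $h=b$ is a non-trivial common divisor of $g_1,\dots,g_\ell$ modulo $g_{\ell+1}$, contradicting the assumption. Therefore $\mathcal{A}$ is not free.

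The main obstacle is bookkeeping the signs and constants in the minor computation. The proportionality constants between the $\Delta_i$ computed in the $\theta$-coordinates and those computed in the basis coordinates must be uniform in $i$. This holds because all minors come from the same matrix product $N\cdot M[\theta_1,\dots,\theta_{\ell-1},\eta]$, via the Cauchy–Binet formula for a $p\times\ell$ times $\ell\times\ell$ product. We also need the reindexing in \Cref{det-l+1-free-basis} to be compatible with the hypotheses. This is harmless, since the hypothesis is symmetric in $g_1,\dots,g_\ell$ and only $\theta_{\ell+1}$ is distinguished.
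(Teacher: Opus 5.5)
Your proof is correct. It shares the paper's skeleton: assume freeness, apply \Cref{det-l+1-free-basis}, and exhibit a positive-degree common divisor of $g_1,\dots,g_\ell$ modulo $g_{\ell+1}$. The middle step, however, is done differently.

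The paper never recomputes a determinant. It substitutes $\theta_\ell=g_{\ell+1}\eta-\sum_{j<\ell}f_j\theta_j$ and $\theta_{\ell+1}=f'\eta+\sum_{j<\ell}f'_j\theta_j$ into the syzygy $\sum_i g_i\theta_i=0$ of \Cref{det-get-relation}. It then uses $S$-independence of the new basis to read off $g_j=g_\ell f_j-g_{\ell+1}f'_j$, and takes $h=g_\ell$ directly.

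You instead factor $M=N\cdot M[\theta_1,\dots,\theta_{\ell-1},\eta]$ and apply \Cref{saito} to the second factor. This makes the $g_i$, up to one common constant, the signed maximal minors of the explicit matrix $N$. Your route costs an extra appeal to Saito's criterion and some sign bookkeeping. In return it gives closed formulas for every $g_i$, and it shows that the $g_i$ transform like Plücker coordinates under a change of basis. In particular, $g_\ell$ is a unit multiple of the $\eta$-coefficient $b$ of $\theta_{\ell+1}$. In the paper's computation this is the vanishing $\eta$-coefficient $g_{\ell+1}(g_\ell+f')=0$, which the paper does not need to use.

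There are two harmless slips:
\begin{itemize}
\item Row $\ell$ of $N$ is $c^{-1}(-a_1,\dots,-a_{\ell-1},g_{\ell+1})$. So the $2\times 2$ determinant is $\pm c^{-1}(a_i b+g_{\ell+1}b_i)$, not $a_ib-g_{\ell+1}b_i$. This is irrelevant for membership in $(b,g_{\ell+1})$.
\item You do not need Cauchy--Binet. For every $\ell$-subset $I$ of rows, $M_I=N_I\,M[\theta_1,\dots,\theta_{\ell-1},\eta]$, so ordinary multiplicativity of determinants of square matrices already gives the uniform constant.
\end{itemize}
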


\begin{proof}
Suppose, for the sake of contradiction, that $\mathcal{A}$ is free. 
By \Cref{det-l+1-free-basis}, after possibly reordering the generators and scaling by a non-zero constant, we may assume there exists a derivation $\eta \in D(\mathcal{A})$ such that 
\[
    g_{\ell+1}\eta = \theta_\ell + \sum_{j<\ell} f_j \theta_j
\]
for some $f_j \in S$, and that the set $\{\theta_1, \ldots, \theta_{\ell-1}, \eta\}$ forms a basis for $D(\mathcal{A})$.

Since this set is a basis, we can express the generator $\theta_{\ell+1} \in D(\mathcal{A})$ as a linear combination of its elements:
\[
    \theta_{\ell+1} = f' \eta + \sum_{j<\ell} f_j' \theta_j
\]
for some coefficients $f', f_j' \in S$. 

Recall from \Cref{det-get-relation} that we have the syzygy:
\begin{align}\label{sumgl=0}
    g_1\theta_1 + \cdots + g_\ell\theta_\ell + g_{\ell+1}\theta_{\ell+1} = 0.
\end{align}
From our first equation, we can write $\theta_\ell = g_{\ell+1}\eta - \sum_{j<\ell} f_j \theta_j$. Substituting this and the expression for $\theta_{\ell+1}$ into the syzygy \eqref{sumgl=0}, we obtain:
\[
    \sum_{j<\ell} g_j \theta_j + g_\ell \left( g_{\ell+1}\eta - \sum_{j<\ell} f_j \theta_j \right) + g_{\ell+1} \left( f' \eta + \sum_{j<\ell} f_j' \theta_j \right) = 0.
\]
Rearranging the terms by grouping the basis elements yields:
\[
    \sum_{j<\ell} (g_j - g_\ell f_j + g_{\ell+1} f_j') \theta_j + g_{\ell+1}(g_\ell + f') \eta = 0.
\]
Since the elements $\theta_1, \ldots, \theta_{\ell-1}, \eta$ form a basis, they are linearly independent over $S$. Therefore, all coefficients must vanish. In particular, for $j = 1, \ldots, \ell - 1$, we have:
\[
    g_j - g_\ell f_j + g_{\ell+1} f_j' = 0 \implies g_j = g_\ell f_j - g_{\ell+1} f_j'.
\]
This equation implies that $g_j \equiv g_\ell f_j \pmod{g_{\ell+1}}$ for all $j < \ell$. Thus, $g_\ell$ divides every $g_j$ modulo $g_{\ell+1}$. 

Since $g_\ell \in S_{>0}$ by assumption, $g_\ell$ itself is a non-trivial common divisor of the set $\{g_1, \ldots, g_\ell\}$ modulo $g_{\ell+1}$. This directly contradicts the hypothesis that $g_1, \ldots, g_\ell$ have no non-trivial common divisor modulo $g_{\ell+1}$. Thus, $\mathcal{A}$ cannot be free.
\end{proof}

\begin{proof}[Proof of \Cref{Generalization-Saito-for-spog}]
By \Cref{NoCommondivisorCauseNotfree}, $\mathcal{A}$ is not free, which implies $\operatorname{pd} D(\mathcal{A}) \ge 1$. Since we assumed $\operatorname{pd} D(\mathcal{A}) \le 1$, we must have $\operatorname{pd} D(\mathcal{A}) = 1$.

Extend the $S$-independent set $\{ \theta_1, \ldots, \theta_\ell \}$ to a generating set 
\[
    G = \{\theta_1, \ldots, \theta_\ell, \eta_{\ell+1}, \ldots, \eta_p\}
\]
of $D(\mathcal{A})$ such that $\eta_j \notin S(G \setminus \{\eta_j\})$ for all $j = \ell+1, \ldots, p$. 

First, we prove that $\theta_i \notin S(G \setminus \{\theta_i\})$ for all $i = 1, \ldots, \ell$, so that $G$ is indeed a minimal generating set. Assume for contradiction that there exists some $i \in \{1, \ldots, \ell\}$ such that
\begin{equation}\label{eq:theta_i-a}
    \theta_i = \sum_{\substack{k=1 \\ k \neq i}}^\ell p_k \theta_k + \sum_{j=\ell+1}^p h_j \eta_j, \quad \text{with } p_k \in S \text{ and } h_j \in S_{>0}.
\end{equation}
By \Cref{relation-get-det}, for each $j = \ell+1, \ldots, p$, the derivations $\theta_1, \dots, \theta_\ell, \eta_j$ satisfy a relation
\begin{equation}\label{eq:rel_theta_j-a}
    g_{\ell+1}\eta_j = g_1^j\theta_1 + \cdots + g_\ell^j\theta_\ell,
\end{equation}
where $g_k^j Q = (-1)^k \det M[\theta_1, \dots, \widehat{\theta_k}, \dots, \theta_\ell, \eta_j]$. 
Multiplying \eqref{eq:theta_i-a} by $g_{\ell+1}$ and substituting \eqref{eq:rel_theta_j-a} into the right-hand side, we can express $g_{\ell+1}\theta_i$ entirely in terms of $\{\theta_1, \dots, \theta_\ell\}$. Since $\theta_1, \ldots, \theta_\ell$ are $S$-independent, we can equate the coefficients of $\theta_i$ on both sides to obtain:
\[
    g_{\ell+1} = \sum_{j=\ell+1}^p h_j g_i^j.
\]
Since $g_{\ell+1} \in S_1 \setminus \{0\}$ has degree 1 and each $h_j \in S_{>0}$ has degree $\ge 1$, there must exist some $j$ such that $h_j\neq 0$ has degree 1 and $g_i^j \in \mathbb{K}^*$. However, if $g_i^j \in \mathbb{K}^*$, \Cref{saito} would imply that the set $\{\theta_1, \dots, \widehat{\theta_i}, \dots, \theta_\ell, \eta_j\}$ forms a free basis for $D(\mathcal{A})$, contradicting the fact that $\mathcal{A}$ is not free. Thus, $G$ must be a minimal set of generators.

Next, we show that $p = \ell+1$. Since $\operatorname{pd} D(\mathcal{A}) = 1$ and the rank of $D(\mathcal{A})$ is $\ell$, the syzygy module must have rank $p - \ell$. This yields a minimal free resolution:
\begin{equation}\label{eq:minres-a}
    0 \rightarrow \bigoplus_{j=\ell + 1}^{p} S[-e_j] \xrightarrow{R} \bigoplus_{i=1}^{p} S[-d_i] \rightarrow D(\mathcal{A}) \rightarrow 0,
\end{equation}
where the $d_i$'s are the degrees of the minimal generators in $G$, and the $e_j$'s are the degrees of the minimal relations. Because $\{\theta_1, \dots, \theta_\ell\}$ is $S$-independent, every minimal relation must involve some $\eta_j$. By reordering if necessary, we can pair them such that $e_j > \deg \eta_j$ for all $j = \ell+1, \ldots, p$.
Since $\Delta_{\ell+1}=(-1)^{\ell+1}\det M[\theta_1,\ldots,\theta_\ell]=g_{\ell+1}Q$, we have $\sum_{i=1}^\ell \deg \theta_i=|\A|+1$.
By \Cref{degree-relation-Arr}, we have
\[
    |\mathcal{A}| = \sum_{i=1}^\ell \deg \theta_i + \sum_{j=\ell+1}^p \deg \eta_j - \sum_{j=\ell+1}^p e_j = |\mathcal{A}| + 1 - \sum_{j=\ell+1}^p (e_j - \deg \eta_j).
\]
This simplifies to $\sum_{j=\ell+1}^p (e_j - \deg \eta_j) = 1$. Since $e_j - \deg \eta_j \ge 1$ for each minimal relation, there can be exactly one relation. Consequently, $p - \ell = 1$, yielding $p = \ell+1$. The unique minimal relation up to scaling is \eqref{eq:rel_theta_j-a} with $j = \ell+1$.

Finally, we claim that the original set $\{\theta_1, \ldots, \theta_\ell, \theta_{\ell+1}\}$ forms this minimal generating set for $D(\mathcal{A})$. By \Cref{det-get-relation}, we have the given syzygy:
\begin{align}\label{eq:givenSyzygy}
       g_1\theta_1 + \cdots + g_\ell\theta_\ell + g_{\ell+1}\theta_{\ell+1} = 0. 
\end{align}
If $\theta_{\ell+1} \in S\theta_1 + \cdots + S\theta_\ell + \mathbb{K}^*\eta_{\ell+1}$, we can simply replace $\eta_{\ell+1}$ with $\theta_{\ell+1}$, and we are done. Otherwise, since $G$ is a basis for the generators, we can write:
\[
    \theta_{\ell+1} = u_1 \theta_1 + \cdots + u_\ell \theta_\ell + u \eta_{\ell+1}, \quad \text{with } u \in S_{>0}.
\]
Substituting this into \Cref{eq:givenSyzygy}, we obtain:
\[
    \sum_{i=1}^\ell (g_i + g_{\ell+1} u_i)\theta_i + g_{\ell+1} u \eta_{\ell+1} = 0.
\]
Because the syzygy module is generated by the single relation \eqref{eq:rel_theta_j-a}, this equation must be a polynomial multiple of $g_{\ell+1}\eta_{\ell+1} - \sum g_i^{\ell+1}\theta_i = 0$. In particular, the coefficient of $\eta_{\ell+1}$ dictates that the multiplier is exactly $u$. Thus, comparing the coefficients of $\theta_i$, we get:
\[
    g_i + g_{\ell+1} u_i = u g_i^{\ell+1}
\]
for each $i = 1, \ldots, \ell$. This implies that:
\[
    g_i \equiv u g_i^{\ell+1} \pmod{g_{\ell+1}} \quad \text{for } i = 1, \ldots, \ell.
\]
Since $u \in S_{>0}$, it acts as a non-trivial common divisor of $g_1, \ldots, g_\ell$ modulo $g_{\ell+1}$. This perfectly contradicts our initial assumption. Thus, $u$ must belong to $\mathbb{K}^*$, confirming that $\{\theta_1, \ldots, \theta_{\ell+1}\}$ minimally generates $D(\mathcal{A})$.
\end{proof}

\section{Future Work}\label{sec:conjectures}

We outline several directions related to this work that we believe are both promising and feasible:

\begin{enumerate}[label=(\arabic*)]
    \item We consider two possible extensions of \Cref{Generalization-Saito-for-spog}.

    First, completely dropping the homological assumption $\operatorname{pd} D(\A) \le 1$:
    \begin{conjecture}\label{conj:SPOG_characterisation}
    $\A$ is SPOG, and the set $\{\theta_1,\dots,\theta_{\ell+1}\}$ forms a minimal generating set for $D(\A)$ satisfying the unique minimal degree relation
    \[
        g_1\theta_1+\cdots+g_{\ell+1}\theta_{\ell+1}=0,
    \]
    if and only if $g_{\ell+1}\in S_1\setminus\{0\}$, and the coefficients $g_1, \ldots, g_\ell\in S_{>0}$ have no non-trivial common divisor modulo $g_{\ell+1}$.
    \end{conjecture}

    Second, dropping the condition $g_{\ell+1}\in S_1$ leads to a more general Betti number prediction:
    \begin{conjecture}\label{conj:freeres}
    When $\operatorname{pd} D(\A)=1$, the module $D(\A)$ admits a minimal free resolution of the following form:
    \[
        0 \rightarrow \bigoplus_{i=\ell + 1}^{p} S[-d_i - 1] \rightarrow \bigoplus_{i=1}^{p} S[-d_i] \rightarrow D(\A) \rightarrow 0.
    \]
    \end{conjecture}

    \item More generally, it is natural to ask how one can characterize the condition under which a subset $G = \{\theta_i \in D(\A) \mid i \in J\}$ forms a set of (minimal) generators for $D(\A)$, expressed purely in terms of the ideal $J(G)$. Here, $J(G)$ is generated by $\det M_I$, where $I \subseteq J$ with $|I| = \ell$.

    For example, at the opposite extreme from the free case (the generic case), we conjecture the following:
    \begin{conjecture}
    Let $G=\{\theta_i\in D(\A) \mid i \in J\}$. The ideal $J(G)$ satisfies $J(G) = S_{\geq k} \cdot Q$, where
    \[
        k = (\ell - 1)(|\A| - \ell - 1),
    \]
    if and only if $\A$ is generic and $G$ forms a minimal generating set of $D(\A)$.
    \end{conjecture}

    \item Finally, we consider the projective dimension of the restriction, directly relating to Orlik's Conjecture:
    \begin{conjecture}
    Let $H\in \A$, and suppose $\A$ is free. Then $\operatorname{pd} D(\A^H)\leq 1$.
    \end{conjecture}
    Notably, this property has been observed in all known counterexamples to Orlik's conjecture.
    Building upon this, we raise the following algebraic problem regarding the deletion-restriction behavior in the $\operatorname{pd}=1$ regime:
    \begin{problem}
    Let $H,L\in \A$, and suppose $\A$ is free. If the deletion $\A\setminus\{L\}$ is SPOG, what is the explicit structure of the restriction $D(\A\setminus\{L\})^H$ and its multiarrangement counterpart $D((\A\setminus\{L\})^H,m)$?
    \end{problem}

The combination of the Euler restriction and our conjectural insights is poised to provide transformative analytical tools for the study of $D(\A^H)$ beyond the free regime.
\end{enumerate}

We believe that pursuing these directions will significantly deepen our understanding of the algebraic and geometric structures governing hyperplane arrangements.

\printbibliography

@article {spog,
    AUTHOR = {Abe, Takuro},
     TITLE = {Plus-one generated and next to free arrangements of
              hyperplanes},
    journal = {International Mathematics Research Notices},
    volume = {2021},
    number = {12},
    pages = {9233-9261},
    year = {2019},
    month = {06},
      ISSN = {1073-7928,1687-0247},
   MRCLASS = {52C35 (32S22)},
  MRNUMBER = {4276319},
MRREVIEWER = {Piotr\ Pokora},
       DOI = {10.1093/imrn/rnz099},
       URL = {https://doi.org/10.1093/imrn/rnz099},
}

@book {O-T,
    AUTHOR = {Orlik, Peter and Terao, Hiroaki},
     TITLE = {Arrangements of hyperplanes},
    SERIES = {Grundlehren der mathematischen Wissenschaften [Fundamental
              Principles of Mathematical Sciences]},
    VOLUME = {300},
 PUBLISHER = {Springer-Verlag, Berlin},
      YEAR = {1992},
     PAGES = {xviii+325},
      ISBN = {3-540-55259-6},
   MRCLASS = {52B30 (14F35 20F36 20F55 32S25 57N65)},
  MRNUMBER = {1217488},
MRREVIEWER = {Michel\ Yves\ Jambu},
       DOI = {10.1007/978-3-662-02772-1},
       URL = {https://doi.org/10.1007/978-3-662-02772-1},
}

@article {Saito1980,
    AUTHOR = {Saito, Kyoji},
     TITLE = {Theory of logarithmic differential forms and logarithmic vector fields},
   JOURNAL = {J. Fac. Sci. Univ. Tokyo Sect. IA Math.},
  FJOURNAL = {Journal of the Faculty of Science. University of Tokyo.
              Section IA. Mathematics},
    VOLUME = {27},
      YEAR = {1980},
    NUMBER = {2},
     PAGES = {265--291},
      ISSN = {0040-8980},
   MRCLASS = {32G11 (14D05 32B30)},
  MRNUMBER = {586450},
MRREVIEWER = {Zoghman\ Mebkhout},
}

@article {graded_betti2010,
    AUTHOR = {Marco-Buzun\'{a}riz, M. A. and Mart\'{i}n-Morales, J.},
     TITLE = {Graded {B}etti numbers of the logarithmic derivation module},
   JOURNAL = {Comm. Algebra},
  FJOURNAL = {Communications in Algebra},
    VOLUME = {38},
      YEAR = {2010},
    NUMBER = {11},
     PAGES = {4348--4361},
      ISSN = {0092-7872,1532-4125},
   MRCLASS = {13N15},
  MRNUMBER = {2824799},
       DOI = {10.1080/00927870903366918},
       URL = {https://doi.org/10.1080/00927870903366918},
}

@article{Abe2024TransAMS,
    AUTHOR = {Abe, Takuro},
     TITLE = {Projective dimensions of hyperplane arrangements},
   JOURNAL = {Trans. Amer. Math. Soc.},
  FJOURNAL = {Transactions of the American Mathematical Society},
    VOLUME = {377},
      YEAR = {2024},
    NUMBER = {11},
     PAGES = {7793--7827},
      ISSN = {0002-9947,1088-6850},
   MRCLASS = {32S22 (52C35)},
  MRNUMBER = {4806197},
MRREVIEWER = {Piotr\ Pokora},
       DOI = {10.1090/tran/9196},
       URL = {https://doi-org.kyoto-u.idm.oclc.org/10.1090/tran/9196},
}

@article{EpureSchulze2019,
    AUTHOR = {Epure, Raul and Schulze, Mathias},
     TITLE = {A {S}aito criterion for holonomic divisors},
   JOURNAL = {Manuscripta Math.},
  FJOURNAL = {Manuscripta Mathematica},
    VOLUME = {160},
      YEAR = {2019},
    NUMBER = {1-2},
     PAGES = {1--8},
      ISSN = {0025-2611,1432-1785},
   MRCLASS = {32S65 (13H10 13N15)},
  MRNUMBER = {3983384},
       DOI = {10.1007/s00229-018-1065-5},
       URL = {https://doi-org.kyoto-u.idm.oclc.org/10.1007/s00229-018-1065-5},
}

@article{FaenziJardimValles2024Avatars,
    AUTHOR = {Faenzi, Daniele and Jardim, Marcos and Vall\`es, Jean},
     TITLE = {Saito criterion and its avatars},
   JOURNAL = {Rend. Circ. Mat. Palermo (2)},
  FJOURNAL = {Rendiconti del Circolo Matematico di Palermo. Second Series},
    VOLUME = {73},
      YEAR = {2024},
    NUMBER = {6},
     PAGES = {2233--2243},
      ISSN = {0009-725X,1973-4409},
   MRCLASS = {13N15 (14J60 14M10 32S65)},
  MRNUMBER = {4808374},
MRREVIEWER = {Amit\ Patra},
       DOI = {10.1007/s12215-024-01061-z},
       URL = {https://doi-org.kyoto-u.idm.oclc.org/10.1007/s12215-024-01061-z},
}

@misc{FaenziJardimValles2024Generalized,
      title={A generalized Saito freeness criterion}, 
      author={Daniele Faenzi and Marcos Jardim and Jean Vallès},
      year={2024},
      eprint={2407.14082},
      archivePrefix={arXiv},
      primaryClass={math.AC},
      url={https://arxiv.org/abs/2407.14082}, 
}

@article {Chu2025CloseToFree,
    AUTHOR = {Chu, Junyan},
     TITLE = {Free resolution of the logarithmic derivation modules of close
              to free arrangements},
   JOURNAL = {J. Algebraic Combin.},
  FJOURNAL = {Journal of Algebraic Combinatorics. An International Journal},
    VOLUME = {61},
      YEAR = {2025},
    NUMBER = {2},
     PAGES = {Paper No. 26, 27},
      ISSN = {0925-9899,1572-9192},
   MRCLASS = {52C35 (14N20)},
  MRNUMBER = {4873880},
MRREVIEWER = {Piotr\ Pokora},
       DOI = {10.1007/s10801-025-01394-7},
       URL = {https://doi-org.kyoto-u.idm.oclc.org/10.1007/s10801-025-01394-7},
}

@misc{AbeKawanoue2024,
      title={Cokernels of the Euler restriction map of logarithmic derivation modules}, 
      author={Takuro Abe and Hiraku Kawanoue},
      year={2024},
      eprint={2406.00305},
      archivePrefix={arXiv},
      primaryClass={math.CO},
      url={https://arxiv.org/abs/2406.00305}, 
}

\end{document}